\theoremstyle{plain}
\newtheorem{theorem}{Theorem}
\numberwithin{theorem}{section}
\newtheorem{lemma}[theorem]{Lemma}
\newtheorem{proposition}[theorem]{Proposition}
\theoremstyle{definition}
\newtheorem{definition}[theorem]{Definition}
\newtheorem{example}[theorem]{Example}
\theoremstyle{empty}
\newtheorem{thm}{Theorem}
\begin{document}
\sloppy

\title[Hilbert-Samuel sequences of homogeneous finite type]
{Hilbert-Samuel sequences of homogeneous finite type}
\author[Konstantin Loginov]{Konstantin Loginov}
\address{Faculty of Mathematics, National Research University Higher School of Economics.
119048 Moscow, Usacheva str., 6  }
\email{kostyaloginov@gmail.com}
\begin{abstract}

This paper deals with the problem of the classification of the local graded Artinian quotients $\mathbb{K}[x, y]\slash I$ where $\mathbb{K}$ is an algebraically closed field of characteristic $0$. They have a natural invariant called Hilbert-Samuel sequence. We say that a Hilbert-Samuel sequence is of homogeneous finite type, if it is the Hilbert-Samuel sequence of a finite number of isomorphism classes of graded local algebras. We give the list of all the Hilbert-Samuel sequences of homogeneous finite type in the case of algebras generated by $2$ elements of degree $1$.
\end{abstract}
\maketitle
%
%
\section*{Introduction}
Fix an algebraically closed field $\mathbb{K}$ of characteristic $0$. All the algebras in this paper are associative, commutative, and over $\mathbb{K}$. We study the Artinian, or, equivalently, finite-dimensional, case. We consider the case of local algebras, because each Artinian algebra is a direct sum of local ones. Each local Artinian algebra $R$ has a natural invariant $\mathrm{T}(R) = ( t_k )_{k\geq0}$ where  $t_k=\dim_{\mathbb{K}} \mathfrak{m}^k\slash \mathfrak{m}^{k+1}$ and $\mathfrak{m}$ is the maximal ideal of $R$. The sequence $\mathrm{T}(R)$ is called \textit{Hilbert-Samuel sequence of} $R$. It is meaningful to classify local algebras according to their Hilbert-Samuel sequences. The complete classification is known for algebras of dimension not greater than $6$. Necessary references are listed below. 

We give a partial solution to the following problem, proposed by Ivan Arzhantsev: find all the Hilbert-Samuel sequences for which the number of pairwise non-isomorphic algebras is finite. We deal with graded algebras that can be generated by $2$ elements of degree $1$. In terms of the Hilbert-Samuel sequence this is equivalent to $t_1=2$. These algebras can be realized as quotients of the polynomial ring $\mathbb{K}[x, y]$ by a homogeneous ideal $I$ of finite colength. It is known that the Hilbert-Samuel sequence of such algebras has the form $\mathrm{T}=(1, 2, \dots, n, t_n, \dots, t_{n+k})$ where $t_i=i+1$ for $0 \leq i \leq n-1$ and $n=t_{n-1} \geq t_n \geq \dots \geq t_{n+k}$ ([6], Lemma 1.3). Under these assumptions, a complete solution to the problem is obtained: in Theorem $1$ we give the complete list of the Hilbert-Samuel sequences $\mathrm{T}$ for which there are finitely many classes of isomorphism of graded quotients $\mathbb{K}[x, y] \slash I$ with Hilbert-Samuel sequence $\mathrm{T}$. We call them \textit{sequences of homogeneous finite type}. As a consequence, for any other sequence there are infinitely many non-isomorphic (graded) algebras with $t_1=2$. We say that these sequences are of infinite type. The result is as follows:

\begin{thm}

A Hilbert-Samuel sequence with $t_1 = 2$ is of homogeneous finite type if and only if it is in the following table:

\begin{tabular}{ | p{0.9cm} | p{7.5cm} | p{5.3cm} | p{1.6cm} |}

\hline
& Sequence & Restrictions  & $\mathrm{dim}\ G_{\mathrm{T}}$ \\  \hline
   
$\mathrm{T}_1$ & $(1, 2, \dots, n-1, n)$ & $n\geq 2$ & $0$ \\ \hline
   
$\mathrm{T}_2$ & $(1, 2, 1)$ & & $2$ \\  \hline
   
$\mathrm{T}_3$ & $(1, 2, 3, 1)$ & & $3$ \\  \hline
   
$\mathrm{T}_4$ & $(1, 2, 3, 2, 1, \dots, 1)$ & \parbox[t]{8cm}{ $t_3=2,\ t_4=\dots=t_{4+k}=1,
\\
k\geq 1$ } & $3$ \\  \hline
   
$\mathrm{T}_5$ & $(1, 2, \dots, n-1, n, 1, 1, \dots, 1)$ & 
\parbox[t]{8cm}{ $t_n=\dots=t_{n+k}=1,
\\
n\geq2,\ k\geq 1$ } & $1$ \\  \hline

$\mathrm{T}_6$ & $(1, 2, \dots, n-1, n, 2, 2, \dots, 2)$ & \parbox[t]{8cm}{ $t_n=\dots=t_{n+k}=2,
\\
n \geq 1,\ k\geq 1$ } & $2$ \\ \hline
   
$\mathrm{T}_7$ & $(1, 2, \dots, n-1, n, 2, 2, \dots, 2, 1, \dots, 1)$ & \parbox[t]{8cm}{
$t_n=\dots=t_{n+k}=2, 
\\
t_{n+k+1}=\dots=t_{n+k+l}=1,
\\
n \geq 1,\ k\geq 1,\ l\geq 1$ } & $2$ if $l>1$ and $3$ if 
$l = 1$ \\  \hline

$\mathrm{T}_8$ & $(1, 2, 3, \dots, n-1, n, 3, 3, \dots, 3)$ & \parbox[t]{8cm}{ $t_n=\dots=t_{n+k}=3,
\\
n \geq 2,\ k\geq 1$ } & $3$ \\  \hline
   
$\mathrm{T}_9$ & $(1, 2, 3, \dots, n-1, n, 3, 3, \dots, 3, 1, 1, \dots, 1)$ & \parbox[t]{8cm}{ $t_n=\dots=t_{n+k}=3, 
\\
t_{n+k+1}=\dots=t_{n+k+l}=1, 
\\
n \geq 2,\ k\geq 1,\ l\geq 2$ } & $3$ \\  \hline
   
$\mathrm{T}_{10}$ & $(1, 2, 3, \dots, n-1, n, 3, 3, \dots, 3, 2, 2, \dots, 2)$ & \parbox[t]{8cm}{
$t_n=\dots=t_{n+k}=3,
\\
t_{n+k+1}=\dots=t_{n+k+l}=2,
\\
n \geq 2,\ k\geq 1,\ l\geq 2$ } & $3$ \\  \hline
   
$\mathrm{T}_{11}$ & 
\parbox[t]{8cm}{ $ (1, 2, 3, \dots, n-1, n, 3, 3, \dots, 3, 2, 2, \dots, 2, $ \\ $1, 1, \dots, 1) $ } & \parbox[t]{8cm}{
$t_n=\dots=t_{n+k}=3,
\\
t_{n+k+1}=\dots = t_{n+k+l}=2,
\\
t_{n+k+l+1}=\dots = t_{n+k+l+s}=1,
\\
n\geq 2,\ k\geq 1,\ l\geq 2,\ s\geq 2$ } & $3$ \\  \hline

\end{tabular}

\

Table 1. 

\end{thm}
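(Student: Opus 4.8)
The plan is to convert the classification into a finiteness statement for a group action. Set $S=\mathbb{K}[x,y]$, graded by degree, and for an admissible sequence $\mathrm{T}=(t_0,\dots,t_{n+k})$ put $c_d=(d+1)-t_d$. Let $G_{\mathrm{T}}$ be the variety of homogeneous ideals $I\subseteq S$ with $\dim(S/I)_d=t_d$ for all $d$: it is the locally closed subset of $\prod_d\mathrm{Gr}(c_d,S_d)$ cut out by the conditions $S_1\cdot I_d\subseteq I_{d+1}$, and it is nonempty (the shapes of ([6], Lemma 1.3) are exactly the realizable ones). The group $\mathrm{GL}(S_1)=\mathrm{GL}_2$ acts on $G_{\mathrm{T}}$, scalar matrices act trivially, so the action descends to $\mathrm{PGL}_2$, which has dimension $3$. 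Since a graded algebra generated in degree $1$ is recovered from the pair $(S_1,I)$ and, both being generated in degree one, every graded isomorphism of two such algebras is induced by a linear substitution on the degree-one parts, the quotients $S/I$ and $S/I'$ are isomorphic as graded algebras if and only if $I,I'$ lie in a single $\mathrm{PGL}_2$-orbit. Hence $\mathrm{T}$ is of homogeneous finite type if and only if $\mathrm{PGL}_2$ has finitely many orbits on $G_{\mathrm{T}}$; in particular $\dim G_{\mathrm{T}}\le 3$ is necessary, since a constructible set that is a finite union of $\mathrm{PGL}_2$-orbits has dimension at most $\dim\mathrm{PGL}_2=3$.

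It therefore suffices to prove (i) that $\dim G_{\mathrm{T}}\le 3$ holds precisely for $\mathrm{T}\in\{\mathrm{T}_1,\dots,\mathrm{T}_{11}\}$, with the dimensions listed in Table $1$; and (ii) that for each of those eleven families $\mathrm{PGL}_2$ acts on $G_{\mathrm{T}}$ with finitely many orbits. For (i) I would use the structure of homogeneous finite-colength ideals of $S$: such an $I$ is Cohen–Macaulay of codimension $2$, hence has a Hilbert–Burch resolution $0\to\bigoplus_j S(-b_j)\to\bigoplus_i S(-a_i)\to I\to 0$ whose twists $a_i,b_j$ are determined by $\mathrm{T}$ up to consecutive cancellations; for each admissible choice of twists the corresponding stratum of $G_{\mathrm{T}}$ is an open subset of an iterated Grassmann bundle, where in degree $d$ one chooses $I_d/(S_1I_{d-1})$ inside $S_d/(S_1I_{d-1})$, and its dimension is an explicit sum in the first differences of $\mathrm{T}$. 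A useful auxiliary fact (proved by extracting gcd's of forms, or dually via Macaulay inverse systems) is that for a subspace $V\subseteq S_d$ one has $\dim S_1V=\dim V+1$ if and only if $V=g\cdot S_{\dim V-1}$ for a form $g$; this controls exactly when no new generator of $I$ appears. Maximizing over strata gives a closed formula for $\dim G_{\mathrm{T}}$, and substituting the admissible shape $\mathrm{T}=(1,2,\dots,n,t_n,\dots,t_{n+k})$ with $n\ge t_n\ge\dots\ge t_{n+k}\ge 1$ and splitting into cases according to the top value $n$, the plateau values occurring among $t_n,\dots,t_{n+k}$ (which must be $\le 3$, since a plateau of value $v$ already forces a $\mathbb{P}(S_v)$ of binary forms into $G_{\mathrm{T}}$) and their lengths, one checks that $\dim G_{\mathrm{T}}\le 3$ occurs exactly in the eleven families, including the jump from $2$ to $3$ in $\mathrm{T}_7$ when the terminal run of $1$'s has length $l=1$, and the constraints $k\ge1$, $l\ge2$, $s\ge2$.

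For (ii), the explicit description coming from (i) shows that in each of the eleven families the ideal $I$ is governed by a bounded package of data: the common linear factor(s) of its lowest nonzero graded piece together with at most one additional binary form of degree $\le 3$ (a conic, a cubic; in several families $I=q\,\mathfrak{m}^{n-2}+\mathfrak{m}^N$, $I=g\,\mathfrak{m}^{n-3}+\mathfrak{m}^N$, or similar, for the appropriate $N$), plus a truncation datum carrying no moduli. Hence $G_{\mathrm{T}}$ admits a $\mathrm{PGL}_2$-equivariant morphism with finite fibres to a product of spaces of binary forms of degree $\le 3$ — i.e.\ of effective divisors of degree $\le 3$ on $\mathbb{P}^1$ — on each of which $\mathrm{PGL}_2$ acts with finitely many orbits, indexed by the partition type of the divisor; it follows that $\mathrm{PGL}_2$ has finitely many orbits on $G_{\mathrm{T}}$. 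I expect step (i) to be the main obstacle: getting the dimension formula into a usable form and then carrying out the case analysis without omissions is where the real work lies, and the boundary cases — where enlarging a plateau or appending a single $1$ or $2$ changes $\dim G_{\mathrm{T}}$ — are the delicate ones that pin down exactly the inequalities recorded in the table.
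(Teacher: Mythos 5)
Your proposal follows essentially the same architecture as the paper: identify homogeneous finite type with finiteness of the number of $\mathrm{PGL}_2$-orbits on the variety $G_{\mathrm{T}}$ of homogeneous ideals of type $\mathrm{T}$, deduce that $\dim G_{\mathrm{T}}\le 3$ is necessary, verify that this inequality singles out exactly the eleven families, and then classify the ideals in those families explicitly. The differences lie in how two sub-steps are carried out. For the dimension of $G_{\mathrm{T}}$ the paper simply quotes Iarrobino's formula $\dim G_{\mathrm{T}}=\sum_{j\ge n}(e_j+1)e_{j+1}$ in the jump indices $e_j=t_{j-1}-t_j$ and runs the case analysis on that closed form; you propose to re-derive such a formula via Hilbert--Burch resolutions and an iterated Grassmann-bundle stratification. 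That route should work and would make the argument self-contained, but it is substantially more work than the paper invests, and the case analysis you defer to the end of step (i) is exactly where the paper spends its Section 2. Your auxiliary fact --- $\dim S_1V=\dim V+1$ if and only if $V=g\,S_{\dim V-1}$ --- is precisely the paper's key Lemma 3.3 (stated there as: a plateau $t_n=\dots=t_{n+k}=s$ with $k\ge1$ forces $I_n=(h)_n$ with $\deg h=s$), and your finiteness argument for the eleven families via a $\mathrm{PGL}_2$-equivariant finite-fibre map to products of spaces of binary forms of degree at most $3$ is a uniform repackaging of the paper's Propositions 3.1--3.7, which establish the same thing case by case.

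One point you should not elide: the definition of homogeneous finite type counts algebras up to abstract isomorphism, whereas orbits of $\mathrm{PGL}_2$ on $G_{\mathrm{T}}$ correspond to graded-isomorphism classes. Your ``Hence $\mathrm{T}$ is of homogeneous finite type if and only if $\mathrm{PGL}_2$ has finitely many orbits'' silently identifies the two notions. The nontrivial direction, which is needed for the necessity of $\dim G_{\mathrm{T}}\le3$ (infinitely many orbits must yield infinitely many pairwise non-isomorphic algebras, not merely non-isomorphic as graded algebras), is that two graded quotients of $\mathbb{K}[x,y]$ that are abstractly isomorphic are already isomorphic by a grading-preserving map; the paper proves this as Lemma 4.1 by replacing an arbitrary isomorphism with its linear truncation and checking that the truncation is still an isomorphism. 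It is a short argument, but it is a genuine step and your outline needs it.
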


\

Our main tool in proving Theorem $1$ is a result of A. Iarrobino, see [6]. Following his notation, let $\mathrm{Hilb}^N\ \mathbb{K}[[x, y]]$ be the Hilbert scheme (with reduced scheme structure) parametrizing ideals of colength $N$ in $\mathbb{K}[[x, y]]$. Let $G_{\mathrm{T}}$ be the subscheme of $\mathrm{Hilb}^N\  \mathbb{K}[[x, y]]$ parametrizing the homogenenous ideals of the type $\mathrm{T}$, that is having the Hilbert-Samuel sequence $\mathrm{T} = (1, 2, \dots, n, t_n, \dots, t_{n+k})$ with $n = t_{n-1} \geq t_n \geq \dots \geq t_{n+k}$. Let us denote $e_j = t_{j-1} - t_j$ if $j \geq n$ and $e_j = 0$ otherwise. These numbers $e_j$ are called \textit{jump indices} of $\mathrm{T}$. 

\begin{thm} $[6$, Theorem $2.12]$
In the notation as above $$\mathrm{dim}\ G_{\mathrm{T}} = \sum_{j\geq n} (e_j + 1) e_{j+1}$$
\end{thm}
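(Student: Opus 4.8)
The plan is to reduce $G_{\mathrm{T}}$ to a problem about subspaces of the graded pieces of $R=\mathbb{K}[x,y]$ and then compute its dimension through the Zariski tangent space at a general point, using the graded minimal free resolution of a homogeneous ideal of type $\mathrm{T}$. Write $V_d=R_d$ (so $\dim V_d=d+1$) and $I_d=I\cap V_d$. A homogeneous ideal $I$ has type $\mathrm{T}$ exactly when $\dim I_d=(d+1)-t_d$ for all $d$; this forces $I_d=0$ for $d<n$ and $I_d=V_d$ for $d>n+k$, so $G_{\mathrm{T}}$ is the locally closed subscheme of $\prod_{d=n}^{n+k}\mathrm{Gr}\big((d+1)-t_d,\,V_d\big)$ cut out by the incidence conditions $V_1\cdot I_d\subseteq I_{d+1}$. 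The point that makes this nontrivial is that for a general tuple of subspaces the multiplication maps force $\dim V_1\cdot I_d$ above $(d+2)-t_{d+1}$, so the tuples that extend to an ideal of type $\mathrm{T}$ form only a proper closed subvariety, and there is no naive fibration of $G_{\mathrm{T}}$ over these Grassmannians.

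To get around this, note that for homogeneous $I$ the tangent space to $G_{\mathrm{T}}$ at $I$ is $\Hom_R(I,R/I)_0$: a degree-zero $R$-homomorphism $I\to R/I$ is precisely a first-order deformation of $I$ through homogeneous ideals, which is automatically flat with constant Hilbert function, hence stays in $G_{\mathrm{T}}$. Take the graded minimal free resolution
\begin{equation*}
0\longrightarrow F_1\xrightarrow{\ \psi\ }F_0\longrightarrow I\longrightarrow 0,\qquad F_0=\bigoplus_i R(-a_i),\quad F_1=\bigoplus_j R(-b_j),
\end{equation*}
of length one (Hilbert--Burch, since $R/I$ is Cohen--Macaulay of codimension $2$). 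Applying $\Hom_R(-,R/I)$, using $\Hom_R(R(-a),R/I)_0=(R/I)_a$ and $\operatorname{Ext}^1_R(I,R/I)=\operatorname{coker}\psi^*$, gives
\begin{equation*}
\dim_{\mathbb{K}}\Hom_R(I,R/I)_0=\sum_i t_{a_i}-\sum_j t_{b_j}+\dim_{\mathbb{K}}\operatorname{Ext}^1_R(I,R/I)_0 .
\end{equation*}
The first two terms depend only on $\mathrm{T}$: from $(1-t)^2H_{R/I}(t)-1=\sum_j(\beta_{2,j}-\beta_{1,j})t^j$ one reads that $c_j:=\beta_{1,j}-\beta_{2,j}$ equals $0$ for $j<n$, equals $e_n+1$ for $j=n$, and equals $e_j-e_{j-1}$ for $j>n$, and $\sum_i t_{a_i}-\sum_j t_{b_j}=\sum_j c_j t_j$ (a cancellation term in a non-minimal resolution adds equally to both sums).

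The two remaining inputs are where the work lies. First, I need $G_{\mathrm{T}}$ to be smooth and irreducible, so that $\dim G_{\mathrm{T}}=\dim_{\mathbb{K}}T_I G_{\mathrm{T}}$ at every point; following Iarrobino, one gets this by parametrizing ideals of type $\mathrm{T}$ by $(m-1)\times m$ Hilbert--Burch matrices of the expected degree shape — these form an irreducible variety dominating $G_{\mathrm{T}}$ under the maximal-minors map, and the same description exhibits $G_{\mathrm{T}}$ as smooth. Second, and this is the crux, I need $\operatorname{Ext}^1_R(I,R/I)_0=0$ for general (hence, by smoothness and constancy of $\dim T_I G_{\mathrm{T}}$, for every) $I$, equivalently that the transpose of the Hilbert--Burch matrix of a general ideal of type $\mathrm{T}$ induces a surjection $\bigoplus_i(R/I)_{a_i}\twoheadrightarrow\bigoplus_j(R/I)_{b_j}$; this is exactly the place where genericity of the Hilbert--Burch matrix has to be used essentially (for the finitely many $\mathrm{T}$ of Table~1 one can instead verify it by a direct low-dimensional check). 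Granting these, $\dim G_{\mathrm{T}}=\sum_j c_j t_j$.

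It remains to simplify $\sum_j c_j t_j$ combinatorially. Using $t_j=\sum_{i>j}e_i$ (so in particular $t_n=\sum_{i\ge n+1}e_i$),
\begin{equation*}
\sum_j c_j t_j=(e_n+1)t_n+\sum_{j>n}(e_j-e_{j-1})t_j=t_n+\sum_{j\ge n}e_j(t_j-t_{j+1})=t_n+\sum_{j\ge n}e_j e_{j+1},
\end{equation*}
and since $\sum_{j\ge n}e_{j+1}=t_n$ this equals $\sum_{j\ge n}(e_j+1)e_{j+1}$, which is the asserted formula.
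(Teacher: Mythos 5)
First, a point of comparison: the paper contains no proof of this statement --- it is quoted verbatim from Iarrobino [6, Theorem 2.12] --- so there is no internal argument to measure yours against. Judged on its own terms, your bookkeeping is correct: the identity $(1-t)^2H_{R/I}(t)-1=\sum_j(\beta_{2,j}-\beta_{1,j})t^j$ does give $c_n=e_n+1$ and $c_j=e_j-e_{j-1}$ for $j>n$, the Euler-characteristic computation of $\dim_{\mathbb{K}}\Hom_R(I,R/I)_0$ from the length-one resolution is right, and the telescoping $\sum_j c_jt_j=t_n+\sum_{j\ge n}e_je_{j+1}=\sum_{j\ge n}(e_j+1)e_{j+1}$ checks out (using $t_j=\sum_{i>j}e_i$).

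However, the two inputs you yourself describe as ``where the work lies'' are essentially the whole content of the theorem, and neither is established. (a) A dominant map from an irreducible space of Hilbert--Burch matrices would give irreducibility of $G_{\mathrm{T}}$ (once you know the degree pattern of the generic minimal resolution, which itself needs an argument), but it does not ``exhibit $G_{\mathrm{T}}$ as smooth''; without smoothness of the natural scheme structure at the point you compute at, the tangent-space identification only yields the inequality $\dim G_{\mathrm{T}}\le\sum_jc_jt_j+\dim\operatorname{Ext}^1_R(I,R/I)_0$, and you would separately need a matching lower bound. (b) The vanishing $\operatorname{Ext}^1_R(I,R/I)_0=0$ at a general $I$ is asserted, not proven, and your proposed fallback --- ``for the finitely many $\mathrm{T}$ of Table 1 one can verify it by a direct low-dimensional check'' --- fails twice over: each row of Table 1 is an infinite family in $n,k,l,s$, and the formula is needed for \emph{all} $\mathrm{T}$, since the infinite-type direction of Theorem 1 rests on knowing $\dim G_{\mathrm{T}}>3$ for every sequence \emph{not} in the table. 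As it stands this is a proof conditional on its hardest steps; to close it you would need either to prove the $\operatorname{Ext}^1$ vanishing (say, by exhibiting for each $\mathrm{T}$ one explicit ideal with the expected Hilbert--Burch matrix and checking surjectivity of $\psi^*$ there, then propagating by semicontinuity), or to follow the cited source, whose argument proceeds by an explicit parametrization of $G_{\mathrm{T}}$ that delivers smoothness, irreducibility and the dimension simultaneously rather than via deformation theory.
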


In Proposition $2.1$ we show that the sequences listed in Table $1$ are precisely those for which $\mathrm{dim}\ \mathrm{G}_{\mathrm{T}} \leq 3$. The number $3$ here comes from the dimension of $\mathrm{PGL}(2)$. The reason for this is as follows: the group $\mathrm{PGL} (2)$ acts on $\mathrm{G}_{\mathrm{T}}$ identifying isomorphic algebras. As a consequence, if $\mathrm{dim}\  \mathrm{G}_{\mathrm{T}} > \mathrm{dim} \ \mathrm{PGL}(2)$ then the number of orbits is infinite and hence there are infinitely many non-isomorphic algebras, so the sequence $\mathrm{T}$ is of infinite type. It remains to show that the sequences listed above are of homogeneous finite type. This is done in Propositions $3.1 - 3.7$.    

Now we present some motivation for the problem. The classification of local finite-dimensional algebras is closely related to the  classification of pairs of commuting nilpotent operators (see [3]) and to the study of punctual Hilbert scheme $\mathrm{Hilb}^N\ \mathbb{K}[[x, y]]$, see $[3]$, $[6]$, $[7]$ and $[8]$.

Local algebras arise naturally in the theory of algebraic transformation groups, as shown by Hassett and Tschinkel. Namely, in Proposition $2.15$ and Section $3$ in $[5]$ they prove that there is a one-to-one correspondence between local algebras of dimension $n+1$ and  additive structures on the projective space $\mathbb{P}^n$. Recall that an additive structure on an algebraic variety over a field $\mathbb{K}$ is an isomorphism class of faithful actions with an open orbit of the group $\mathbb{G}_a^n$, where $\mathbb{G}_a$ is the additive group of $\mathbb{K}$. The theory of additive structures is analogous to the theory of toric varieties over $\mathbb{K}$, that is, varieties which admit $\mathbb{G}_m^n$-action with an open orbit, $\mathbb{G}_m$ being the multiplicative group of $\mathbb{K}$. The additive case is less studied at the moment. However, the case of additive structures on projective hypersurfaces is treated in [1] and [2] using local algebras. 

The problem of the classification of local algebras has been studied by many people. For the algebras of dimension not greater than $6$, the  classification (excluding the case when characteristic of the field is $2$ or $3$) was obtained in the work of Mazzola, see [7]. In fact, Mazzola worked with nilpotent commutative associative algebras without unit, but these algebras of dimension $n$ clearly are in one-to-one correspondence with local algebras with unit of dimension $n+1$. He obtained the complete list of isomorphism classes (there is a  finite number of them in this case), using Hochschild cohomology theory.

A more elementary approach was used in the work of Poonen, see $[9]$. He provided direct computations which are valid for a ground field of any characteristic. 

Iarrobino studied local algebras extensively, see his work $[6]$.

Surpunenko (see $[12]$) showed that the number of pairwise non-isomorphic algebras is infinite in any dimension greater than $6$. He worked with matrix algebras to show that. See also $[13]$.

The results of the paper can be generalized in two ways. Firstly, one can consider local algebras with arbitrary $t_1$. However, the situation seems to be much more complicated in this case. Secondly, one can study non-graded local algebras. In our case, when $t_1=2$, one may hope to find infinite families of non-isomorphic algebras for some of the  sequences that we list in Theorem $1$. 

The author is grateful to Ivan Arzhantsev for posing the problem and encouraging in writing the paper, to Ilya Kazachkov and the University of Basque Country for their hospitality during the work on the second version of this paper, and to the referee for the careful reading of the paper and many valuable remarks.

%
\section{Preliminaries}
\label{sec1}

Fix a base field $\mathbb{K}$ which we assume to be algebraically closed of characteristic zero. 
We consider commutative associative local finite-dimensional algebras $R$ over $\mathbb{K}$ with identity and the maximal ideal $\mathfrak{m}$. The word ``algebra'' means this throughout the paper.

Let us make some preliminary remarks concerning the structure of such algebras. First of all, since $\mathbb{K}$ is algebraically closed and $R$ is finite-dimensional we have $R \slash \mathfrak{m} \simeq \mathbb{K}$. It is easy to see that the ideal $\mathfrak{m}$ is nilpotent. It is also well known that if the images of the elements $x_1, \dots, x_n$ span $\mathfrak{m} \slash \mathfrak{m}^2$ then these elements generate the ideal $\mathfrak{m}$ and the algebra $R$ as well, thanks to Nakayama's lemma. The minimal number of generators of $R$ is equal to the dimension of $\mathfrak{m} \slash \mathfrak{m}^2$. Moreover, the elements $x_1^{k_1}\dots x_n^{k_n}$ for $k_1+\dots+k_n=k$ span $\mathfrak{m}^k \slash \mathfrak{m}^{k+1}$. Recall the following definition.

\begin{definition}
The \textit{Hilbert-Samuel sequence} of an algebra $R$ is the sequence $\mathrm{T}=\mathrm{T}(R)=(t_0, t_1, \dots)$ with $t_i=t_i(R)=\dim_{\mathbb{K}} \mathfrak{m}^i \slash \mathfrak{m}^{i+1}$ where $\mathfrak{m}^0=R$. 
\end{definition}

Occasionally we say that an algebra $R$ is of the type $\mathrm{T}(R)$. Since $R$ is finite-dimensional, only a finite number of $t_i$ are non-zero, so we write only non-zero terms of the sequence. As we have mentioned above $\dim_{\mathbb{K}} R\slash \mathfrak{m} = t_0(R) = 1$. We introduce the following definition. 

\begin{definition}
A Hilbert-Samuel sequence $\mathrm{T}$ is called a \textit{sequence of finite type} if there is only a finite number of pairwise non-isomorphic algebras with this sequence. Otherwise, $\mathrm{T}$ is called a \textit{sequence of infinite type}.
\end{definition}

\begin{example}
Any sequence with $t_1=1$ is of finite type. Moreover, one can show that $t_1(R)=1$ implies $\mathrm{T}(R)=(1, 1, \dots, 1)$, that is $t_i=1$ for $0 \leq i \leq n-1$, $t_j=0$ for $j\geq n$ for some $n$, and $R\simeq \mathbb{K}[x]\slash (x^{n})$. 
\end{example}

We address the following problem: find all the Hilbert-Samuel sequences of finite type. We give a solution to this problem for graded algebras generated by two elements of degree $1$, that is $t_1=2$. This case is especially nice since we have the following proposition ($[6]$, Lemma $1.3$).

\begin{proposition} 
If $R$ is an algebra with $t_1(R)=2$ then its Hilbert-Samuel sequence has the form $(1, 2, 3, \dots, n, t_n, \dots, t_{n+k})$ where $t_i=i+1$ for $0 \leq i \leq n-1$ and $n = t_{n-1} \geq t_n \geq \dots \geq t_{n+k} \geq 0$ for some $n\geq 2, k\geq 0$. Conversely, for any sequence of this form there is an algebra with this sequence.
\end{proposition}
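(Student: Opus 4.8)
The plan is to reduce to the graded case and then prove the two implications separately, the forward one hinging on a single surjectivity statement. First I would observe that $\mathrm{T}(R)$ coincides with the Hilbert–Samuel sequence of the associated graded algebra $\bigoplus_{i\ge 0}\mathfrak{m}^i/\mathfrak{m}^{i+1}$, which by the preliminary remarks (products of $t_1=2$ elements of degree $1$ span each $\mathfrak{m}^k/\mathfrak{m}^{k+1}$) is a graded quotient $\mathbb{K}[x,y]/I$ with $I$ homogeneous. So we may assume $R=\mathbb{K}[x,y]/I$, $t_i=\dim_{\mathbb{K}}R_i$ with $R_i=\mathbb{K}[x,y]_i/I_i$; then $t_i\le\dim\mathbb{K}[x,y]_i=i+1$ always, $t_0=1$, $t_1=2$, and $t_i=0$ for $i\gg 0$.

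For the forward direction I would let $n$ be the least index with $t_n<n+1$, equivalently with $I_n\neq 0$; it exists since $t_i=0$ eventually, and $n\ge 2$ because $t_0=1,\ t_1=2$. For $i<n$, minimality forces $t_i\ge i+1$, hence $t_i=i+1$; in particular $t_{n-1}=n$, and $I_n\neq 0$ gives $t_n\le n=t_{n-1}$. It remains to produce the non-increasing tail $t_{n-1}\ge t_n\ge t_{n+1}\ge\cdots$. Since $\mathbb{K}[x,y]$ is a domain, $I_d\neq 0$ implies $I_{d+1}\supseteq xI_d\neq 0$; so it suffices to show that $I_d\neq 0$ implies $t_{d+1}\le t_d$ and iterate from $d=n$.

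The key step is: if $I_d\neq 0$, then multiplication by a suitably chosen linear form $\ell$ is a surjection $R_d\twoheadrightarrow R_{d+1}$, which yields $t_{d+1}\le t_d$. To see this, pick $0\neq f\in I_d$ and a linear form $\ell$ not dividing $f$, possible because $\mathbb{K}$ is infinite and $f$ has only finitely many linear factors up to scalars. The ring $\mathbb{K}[x,y]/(\ell)$ is a polynomial ring $\mathbb{K}[s]$ in one variable, so $\mathbb{K}[x,y]_{d+1}\big/\ell\cdot\mathbb{K}[x,y]_d\cong\mathbb{K}[s]_{d+1}$ is $1$-dimensional; the image $\bar f\in\mathbb{K}[s]_d$ of $f$ is nonzero, and the subspace $\mathbb{K}[x,y]_1\cdot f\subseteq I_{d+1}$ maps onto $\mathbb{K}[s]_1\cdot\bar f=\mathbb{K}[s]_{d+1}$. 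Hence $\ell\cdot\mathbb{K}[x,y]_d+I_{d+1}=\mathbb{K}[x,y]_{d+1}$, i.e.\ $R_d\xrightarrow{\ \cdot\ell\ }R_{d+1}$ is onto. I expect this surjectivity to be the only substantive point; the rest is bookkeeping.

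For the converse, given $\mathrm{T}=(1,2,\dots,n,t_n,\dots,t_{n+k})$ with $n=t_{n-1}\ge t_n\ge\cdots\ge t_{n+k}\ge 0$, I would take the monomial ideal
\[
I=\big\langle\,x^ay^{i-a}\ :\ n\le i\le n+k,\ t_i\le a\le i\,\big\rangle+\mathfrak{m}^{\,n+k+1},
\]
and check directly — using $t_j=j+1$ for $j<n$ and $(t_j)$ non-increasing for $j\ge n$ — that $x^ay^{i-a}\in I$ precisely when $a\ge t_i$. Thus the standard monomial basis of $R=\mathbb{K}[x,y]/I$ is indexed by the finite, down-closed lattice set $\{(a,b):a+b\le n-1\}\cup\bigcup_{i=n}^{n+k}\{(a,i-a):0\le a\le t_i-1\}$, which meets each antidiagonal $a+b=i$ in exactly $t_i$ points; down-closedness is precisely the monotonicity hypothesis on $\mathrm{T}$. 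Hence $\mathrm{T}(R)=\mathrm{T}$, and the proof is complete.
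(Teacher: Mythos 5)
Your proof is correct. There is, however, nothing in the paper to compare it against: the paper does not prove this proposition but simply quotes it from the reference ($[6]$, Lemma $1.3$ of Iarrobino's memoir). Your argument is a clean, self-contained substitute. The forward direction is the standard Macaulay-type argument in embedding dimension two: once $I_d\neq 0$, choosing $0\neq f\in I_d$ and a linear form $\ell$ with $\ell\nmid f$ makes multiplication by $\ell$ a surjection $R_d\rightarrow R_{d+1}$, since modulo $\ell$ the one-dimensional space $\mathbb{K}[s]_{d+1}$ is already hit by $\mathbb{K}[x,y]_1\cdot f\subseteq I_{d+1}$; together with $I_{d+1}\supseteq xI_d\neq 0$ this iterates to give the non-increasing tail, and the identification of $n$ and the equalities $t_i=i+1$ for $i<n$ are immediate. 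The converse via the staircase monomial ideal is also correct; the one point that deserves to be written out rather than asserted is that the proposed set of standard monomials is closed under division, which, as you indicate, reduces to $t_i\le t_{i-1}$ for $i\ge n$ (the degrees $i<n$ are vacuous since every monomial of those degrees survives), and that the monomials of $I$ in each degree $i$ are exactly those $x^ay^{i-a}$ with $a\ge t_i$, which again uses monotonicity. Both halves are complete; the argument is more elementary than, though equivalent in content to, the cited source.
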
 

The next definition is useful for us.

\begin{definition}
The \textit{associated graded algebra} of a local algebra $R$ with maximal ideal $\mathfrak{m}$ is the vector space $$\mathrm{Gr}(R) = \bigoplus_{k=0}^{\infty} \mathfrak{m}^k \slash \mathfrak{m}^{k+1}$$ with multiplication defined on the direct summands in the following way: let $\bar{u}\in \mathfrak{m}^k \slash \mathfrak{m}^{k+1}$, $\bar{v}\in \mathfrak{m}^l \slash \mathfrak{m}^{l+1}$ and $u\in \mathfrak{m}^k$, $v\in \mathfrak{m}^l$ be their representatives. Then we define $\bar{u}\bar{v}=\overline{uv}\in \mathfrak{m}^{k+l}\slash \mathfrak{m}^{k+l+1}$. 
\end{definition}

It is easy to see that $R$ and $\mathrm{Gr}(R)$ are isomorphic as vector spaces. In the following definition the standard grading on the ring of polynomials is assumed.

\begin{definition}
An ideal $I$ in $\mathbb{K}[x_1, \dots, x_n]$ is called \textit{homogeneous} if for any polynomial $f\in I$ all the homogeneous components of $f$ are contained in $I$.   
\end{definition}

It is easy to see that a local algebra is isomorphic to a quotient of the ring of polynomials by a homogeneous ideal if and only if it is isomorphic to its associated graded algebra. Such algebras are graded and generated by elements of degree $1$. 

We denote the space of homogeneous polynomials of degree $n$ in $\mathbb{K}[x,y]$ as $\mathbb{K}[x,y]_n$ and the $n$-th homogeneous component of a homogeneous ideal $I$ as $I_n$. We also write $(f)_n$ for the $n$-th component of an homogeneous ideal $(f)$ generated by a homogeneous polynomial $f$. 

\begin{definition}
A Hilbert-Samuel sequence $\mathrm{T}$ is called a \textit{sequence of homogeneous finite type} if there is only a finite number of pairwise non-isomorphic graded (and generated by elements of degree $1$) algebras with such a Hilbert-Samuel sequence. Otherwise, $\mathrm{T}$ is called a \textit{sequence homogeneous infinite type}.
\end{definition}

To solve the problem of finding all the Hilbert-Samuel sequences of finite type completely in the case $t_1=2$ one has to construct infinite families of non-graded algebras of the types listed in Table $1$ or to prove that there are none.
%
\section{Dimension counting}
\label{sec2}

\begin{proposition}
The sequences $\mathrm{T}$ in Table $1$ are precisely those for which the inequality $\mathrm{dim}\ \mathrm{G}_\mathrm{T} \leq 3$ holds. 
\end{proposition}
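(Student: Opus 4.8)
The plan is to combine the formula $\dim G_{\mathrm{T}}=\sum_{j\geq n}(e_j+1)e_{j+1}$ of [6, Theorem 2.12] with an elementary analysis of the jump indices. Two preliminary observations carry most of the weight. First, since $R$ is Artinian the sequence $\mathrm{T}$ is eventually zero, so $e_j=0$ for $j<n$ and, telescoping, $\sum_{j\geq n}e_j=t_{n-1}=n$. Second, expanding the formula gives
$$\dim G_{\mathrm{T}}=\sum_{j\geq n}e_je_{j+1}+\sum_{j\geq n+1}e_j=(n-e_n)+\sum_{j\geq n}e_je_{j+1},$$
a sum of two non-negative quantities. Setting $c:=n-e_n$ we have $c=t_n=\sum_{j\geq n+1}e_j$, and the remaining sum equals $\sum_p e_pe_{p+1}$ taken over those jump positions $p$ for which $p+1$ is again a jump position. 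In particular $\dim G_{\mathrm{T}}\geq c$, with equality precisely when no two jump positions are consecutive.

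Assume now $\dim G_{\mathrm{T}}\leq 3$; then $c=t_n\leq 3$, and I would examine the four values of $c$. If $c=0$ then $\mathrm{T}=(1,2,\dots,n)$, which is $\mathrm{T}_1$. If $c\in\{1,2,3\}$, the nonzero entries among $e_{n+1},e_{n+2},\dots$ form a composition of $c$ — so only $(1)$, $(2)$, $(1,1)$, $(3)$, $(2,1)$, $(1,2)$, $(1,1,1)$ can occur — together with the single jump $e_n=n-c$ at position $n$ when $n>c$. For each such configuration $\sum_p e_pe_{p+1}$ is a sum of products of values of consecutive jumps, so the inequality $(n-c)+\sum_p e_pe_{p+1}\leq 3$ both forces the jump sizes to be as small as the composition allows and sharply restricts which jump positions may be consecutive: for $c=3$ none may be, and for $c\leq 2$ at most one consecutive pair is permitted, the small-$n$ cases in which the jump $e_n$ is itself consecutive to the next jump accounting for $\mathrm{T}_2$, $\mathrm{T}_3$, $\mathrm{T}_4$. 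Reading each surviving configuration back as a Hilbert--Samuel sequence — the lengths of the constant stretches between successive jump positions being exactly the parameters $k,l,s$ of Table 1, and the restrictions $k\geq 1$, $l\geq 2$, $s\geq 2$, etc. being exactly the non-adjacency conditions just found — produces precisely the rows $\mathrm{T}_2,\dots,\mathrm{T}_{11}$, while the value $(n-c)+\sum_p e_pe_{p+1}$ reproduces the last column (for instance $\mathrm{T}_7$ gives $\dim G_{\mathrm{T}}=2$ or $3$ according as the drop to $0$ is separated from or adjacent to the drop from $2$ to $1$, i.e. $l>1$ or $l=1$). The enumeration terminates because there are at most $c\leq 3$ jumps beyond position $n$ plus at most one at position $n$. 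The reverse inclusion is the easy direction: for each row of Table 1 one reads off the jump indices and substitutes into the displayed formula to get $\dim G_{\mathrm{T}}\leq 3$.

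The only genuine difficulty is the bookkeeping. One must track the degenerate readings of the parameter ``$n$'' in Table 1 — e.g. the flat sequence $(1,2,2,\dots,2)$ is $\mathrm{T}_6$ with displayed $n=1$ even though the $n$ of [6] equals $2$, since a constant tail may absorb the top of the ``increasing'' part — and the boundary cases $n=c$ in which $e_n$ vanishes and $\mathrm{T}$ begins $1,2,\dots,n,n,\dots$. Organizing the case split first by $c=t_n$ and then by the adjacency pattern of the jump positions makes these routine and the resulting list visibly exhaustive.
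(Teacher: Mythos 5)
Your skeleton is sound and, modulo one notational slip, it closes all the cases the paper closes; but it is organized around a different pivot than the paper's proof, so a comparison is worth recording. The paper splits first on the parameter $n$: it enumerates the $n=2$ and $n=3$ cases exhaustively via Young diagrams, and only for $n\geq 4$ invokes the inequality $d\geq\sum_{j\geq n}e_{j+1}=t_n$ to force $t_n\leq 3$ before sub-casing on $t_n$. You instead promote that inequality to the exact identity
$$\dim G_{\mathrm{T}}=t_n+\sum_{j\geq n}e_je_{j+1},$$
which is correct, and then classify uniformly in $n$ by the value $c=t_n\in\{0,1,2,3\}$, the composition of $c$ formed by the later jumps, and the adjacency pattern of jump positions. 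This buys two things the paper's proof leaves implicit: the equality case $\dim G_{\mathrm{T}}=t_n$ is exactly ``no two consecutive jump positions,'' which explains at a glance where the table's constraints $k\geq1$, $l\geq2$, $s\geq2$ come from; and the small-$n$ rows $\mathrm{T}_2,\mathrm{T}_3,\mathrm{T}_4$ fall out of the same computation as the generic ones instead of requiring separate diagram enumeration. What it costs is that your writeup is a plan rather than a completed enumeration (``I would examine the four values of $c$''), so the reader still has to carry out the finitely many configurations; I checked that doing so reproduces Table~1 exactly, including the $l>1$ versus $l=1$ dichotomy for $\mathrm{T}_7$.

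Two points to fix before this can stand as a proof. First, having set $c:=n-e_n=t_n$, you twice write the bound as $(n-c)+\sum_pe_pe_{p+1}\leq3$; since $n-c=e_n$, that is literally the wrong quantity (it would, for instance, wrongly exclude $\mathrm{T}_8$ for large $n$). Your displayed identity and your $\mathrm{T}_7$ computation show you mean $c+\sum_pe_pe_{p+1}\leq3$; write that. Second, ``for $c\leq2$ at most one consecutive pair is permitted'' is slightly too coarse: for $c=2$ the surviving pair must also have product $1$ (both jumps of size $1$), and for $c=1$ the unique possible pair $(e_n,e_{n+1})=(n-1,1)$ forces $n\leq3$, which is precisely where $\mathrm{T}_2$ and $\mathrm{T}_3$ come from. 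You gesture at this (``forces the jump sizes to be as small as the composition allows'') but the enumeration should state it explicitly, since it is what eliminates, e.g., $(1,2,3,4,1)$ and $(1,2,3,2,1)$.
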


\begin{proof}
We classify the sequences $\mathrm{T} = (1, 2, \dots, n, t_n, \dots, t_{n+k})$ with $n = t_{n-1} \geq t_n \geq \dots \geq t_{n+k}$ according to the parameter $n$. It can be defined as the first index $i$ such that $t_i \neq i+1$.  Since in our case $t_1 = 2$ we have $n \geq 2$. We denote $\mathrm{dim}\ \mathrm{G}_\mathrm{T}$ as $d = d(\mathrm{T})$ for brevity. We compute $d(\mathrm{T})$ using Theorem $2$.

We first consider the case $n = 2$. Let us start with the subcase $e_2 = t_1 - t_2=2$. We use the graphic representations of the corresponding Hilbert-Samuel sequences. The following diagram corresponds to the sequence $(1, 2)$ which is a special case of $\mathrm{T}_1$:

\begin{center}
\begin{minipage}{12em}
\begin{center}
\young(:\break,\break \break)

\
$\mathrm{T}_1,\ d=0$
\end{center}
\end{minipage}
\end{center}

\

\

If $e_2=1$, we have two sequences $\mathrm{T}_2=(1, 2, 1)$ and $(1, 2, 1, 1, \dots, 1)$ which is a special case of $\mathrm{T}_5$. They give different values of $d$:

\begin{center}
\begin{minipage}{8em}
\begin{center}
\young(:\break:,\break \break \break)

\
$\mathrm{T}_2,\ d=2$
\end{center}
\end{minipage}
\hspace{1cm}
\begin{minipage}{8em}
\begin{center}
\young(:\break::,\break \break \break \break) \dots \young(\break)

\
$\mathrm{T}_5,\ d=1$
\end{center}
\end{minipage}
\end{center}

\

\

In what follows we do not write the sequence but only show the corresponding diagram. If $e_2=0$ we have three possibilities:

\begin{center}
\begin{minipage}{8em}
\begin{center}
\young(:\break \break,\break \break \break) \dots \young(\break,\break) 

\
$\mathrm{T}_6,\ d=2$
\end{center}
\end{minipage}
\hspace{1cm}
\begin{minipage}{8em}
\begin{center}
\young(:\break \break,\break \break \break) \dots \young(\break:,\break \break)

\
$\mathrm{T}_7,\ d=3$
\end{center}
\end{minipage}
\hspace{1cm}
\begin{minipage}{12em}
\begin{center}
\young(:\break \break,\break \break \break) \dots \young(\break::,\break \break \break) \dots
\young(\break)

\
$\mathrm{T}_7,\ d=2$
\end{center}
\end{minipage}
\end{center}

\

\

We see that in all these cases we have $d \leq 3$. Now we turn to sequences with $n=3$. We consider the case $n=3, e_3=3$:

\begin{center}
\begin{minipage}{22em}
\begin{center}
\young(::\break,:\break\break,\break\break\break) 

\
$\mathrm{T}_1,\ d=0$
\end{center}
\end{minipage}
\end{center}

\

\

We turn to the case $n=3, e_3=2$. 

\begin{center}
\begin{minipage}{6em}
\begin{center}
\young(::\break:,:\break\break:,\break\break\break\break)

\
$\mathrm{T}_3,\ d=3$
\end{center}
\end{minipage}
\hspace{1cm}
\begin{minipage}{10em}
\begin{center}
\young(::\break::,:\break\break::,\break\break\break\break\break) \dots \young(\break)

\
$\mathrm{T}_5,\ d=1$
\end{center}
\end{minipage}
\end{center}

\

\

The case $n=3, e_3=1$:

\begin{center}
\begin{minipage}{6em}
\begin{center}
\young(::\break:,:\break\break\break,\break\break\break\break)

\
d=4
\end{center}
\end{minipage}
\hspace{1cm}
\begin{minipage}{6em}
\begin{center}
\young(::\break::,:\break\break\break:,\break\break\break\break\break)

\
d=4
\end{center}
\end{minipage}
\hspace{1cm}
\begin{minipage}{10em}
\begin{center}
\young(::\break::::,:\break\break\break:::,\break\break\break\break\break\break) \dots \young(\break)

\
$\mathrm{T}_4,\ d=3$
\end{center}
\end{minipage}

\

\

\begin{minipage}{10em}
\begin{center}
\young(::\break::,:\break\break\break\break,\break\break\break\break\break) \dots \young(\break,\break) 

\
$\mathrm{T}_6,\ d=2$
\end{center}
\end{minipage}
\hspace{0.6cm}
\begin{minipage}{10em}
\begin{center}
\young(::\break::,:\break\break\break\break,\break\break\break\break\break) \dots \young(\break:,\break\break)

\
$\mathrm{T}_7,\ d=3$
\end{center}
\end{minipage}
\hspace{0.6cm}
\begin{minipage}{14em}
\begin{center}
\young(::\break::,:\break\break\break\break,\break\break\break\break\break)\dots \young(\break::,\break\break\break) \dots \young(\break)

\
$\mathrm{T}_7,\ d=2$
\end{center}
\end{minipage}
\end{center}

\

\

Consider the case $e_3=0$. We do not list all the possibilities, instead we notice that if $e_3=0$ then $$d = \sum_{j\geq 3} (e_j + 1) e_{j+1} \geq \sum_{j\geq 3} e_{j+1} = \sum_{j\geq 3} e_j = t_2 = n = 3.$$

It follows that if we want $d=3$ we must have $e_j = 0$ whenever $e_{j+1} > 0$. The inequality $d\leq 3$ yields the following diagrams:

\
\begin{center}
\begin{minipage}{8em}
\begin{center}
\young(::\break\break,:\break\break\break,\break\break\break\break) \dots \young(\break,\break,\break)

\
$\mathrm{T}_8,\ d=3$
\end{center}
\end{minipage}
\hspace{0.6cm}
\begin{minipage}{14em}
\begin{center}
\young(::\break\break,:\break\break\break,\break\break\break\break) \dots \young(\break::,\break::,\break\break\break) \dots \young(\break)

\
$\mathrm{T}_9,\ d=3$
\end{center}
\end{minipage}

\

\

\begin{minipage}{14em}
\begin{center}
\young(::\break\break,:\break\break\break,\break\break\break\break) \dots \young(\break::,\break\break\break,\break\break\break) \dots \young(\break,\break)

\
$\mathrm{T}_{10},\ d=3$
\end{center}
\end{minipage}
\hspace{0.6cm}
\begin{minipage}{18em}
\begin{center}
\young(::\break\break,:\break\break\break,\break\break\break\break) \dots \young(\break::,\break\break\break,\break\break\break) \dots \young(\break::,\break\break\break) \dots \young(\break)

\
$\mathrm{T}_{11},\ d=3$
\end{center}
\end{minipage}

\end{center}

\

\

Now we are ready to deal with the general case $n\geq 4$. First of all, notice that $d \geq \sum_{j\geq n} e_{j+1}$, and $\sum_{j\geq n} e_j = t_{n-1} = n \geq 4$. Hence to obtain $d\leq 3$ we must have $e_n \geq n-3$, that is $t_n\leq 3$. 

We start with the case $t_n = 0$ in which case we get the sequence $\mathrm{T}_1$ with $d=0$.

If $t_n = 1$ then $e_n =n - 1 \geq 3$ and so to get $d \leq 3$ we must have $e_{n+1} = 0$, since $d\geq (e_{n} + 1) e_{n+1}$. We obtain the sequence $\mathrm{T}_5$. In this case $d=1$.

If $t_n = 2$ then $e_n = n - 2 \geq 2$ and again to get $d\leq 3$ we must have $e_{n+1} \leq 1$. If $e_{n+1} = 0$ we get two options: $T_6$ (in this case $d=2$) and $\mathrm{T}_7$ (in this case $d=3$ or $d=2$ depending on the number of $t_j = 1$). If $e_{n+1} = 1$ then $t_n=2, t_{n+1}=1$ and hence $d = (n - 2 + 1)\times 1 + \sum_{j\geq n+1} e_{j+1} \geq 3 + t_{n+1} = 4$.

Finally, if $t_n = 3$ then $e_n = n - 3 \geq 1$ and to get $d\leq 3$ again we get $e_{n+1} \leq 1$. If $e_{j+1} = 0$ we get the sequences $\mathrm{T}_{9}, \mathrm{T}_{10}, \mathrm{T}_{11}$ with $d=3$ in each case. If $e_{j+1} = 1$ we get $t_n =3, t_{n+1} = 2$. Hence $d\geq 2 + 2 = 4$. The proof is complete.
\end{proof}

\section{Sequences of homogeneous finite type}

In this section we prove that the sequences listed in Table $1$ are of homogeneous finite type.

\begin{proposition}
For any $n \geq 2$ there is only one algebra (up to isomorphism) of the type $$\mathrm{T_1}=(1, 2, \dots, n-1, n),$$ that is $t_i=i+1$ for $0 \leq i \leq n-1$, $t_j=0$ for $j\geq n$.
\end{proposition}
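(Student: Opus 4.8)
The plan is to pin down the defining ideal completely from the numerical data, so that existence is trivial and the whole content is uniqueness. Since the algebra $R$ under consideration is graded and generated by elements of degree $1$, by the discussion preceding Proposition 1.4 we may write $R \cong \mathbb{K}[x,y]/I$ for a homogeneous ideal $I$, and the grading on $R$ agrees with the $\mathfrak{m}$-adic filtration: $R_k \cong \mathfrak{m}^k/\mathfrak{m}^{k+1}$, so $\dim_{\mathbb{K}} R_k = t_k$. On the other hand $R_k = \mathbb{K}[x,y]_k / I_k$ and $\dim_{\mathbb{K}} \mathbb{K}[x,y]_k = k+1$, hence $\dim_{\mathbb{K}} I_k = (k+1) - t_k$ for all $k$.

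First I would read off the degrees in which $I$ lives. For $0 \le k \le n-1$ the hypothesis $t_k = k+1$ gives $\dim_{\mathbb{K}} I_k = 0$, so $I$ has no nonzero homogeneous element of degree $< n$. In degree $n$ the hypothesis $t_n = 0$ gives $\dim_{\mathbb{K}} I_n = n+1 = \dim_{\mathbb{K}} \mathbb{K}[x,y]_n$, whence $I_n = \mathbb{K}[x,y]_n$.

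Next I would identify $I$ with $(x,y)^n$. Since $I$ is an ideal and $I_n = \mathbb{K}[x,y]_n$, it contains $\mathbb{K}[x,y]_n \cdot \mathbb{K}[x,y] = \bigoplus_{m \ge n} \mathbb{K}[x,y]_m = (x,y)^n$; conversely $I_k = 0$ for $k < n$ forces $I \subseteq (x,y)^n$. Therefore $I = (x,y)^n$ and $R \cong \mathbb{K}[x,y]/(x,y)^n$. This simultaneously gives existence (also covered by Proposition 1.4) and shows that any two algebras of type $\mathrm{T}_1$ are isomorphic.

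I do not expect a genuine obstacle here; the argument is bookkeeping with graded pieces. The only points that need care are the reduction to a homogeneous quotient of $\mathbb{K}[x,y]$ and the identification $\dim_{\mathbb{K}} R_k = t_k$ together with $\mathfrak{m}^n = 0 \iff R_n = 0$, all of which is already set up in Section 1.
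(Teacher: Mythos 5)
Your argument is correct and is essentially the paper's own proof, just written with more explicit bookkeeping: the paper likewise observes that there are no relations in degree $<n$ (your $I_k=0$ for $k<n$) and that everything of degree $\geq n$ vanishes (your $I_n=\mathbb{K}[x,y]_n$), concluding $R\cong\mathbb{K}[x,y]/(x,y)^n$. No substantive difference.
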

\begin{proof} 
Pick two generators $x, y$ of $\mathfrak{m}$ and notice that there are no non-trivial relations containing monomials in $x, y$ of degree less than $n$. On the other hand, any monomial in $x$ and $y$ of degree $n$ or more equals zero. Hence our algebra is isomorphic to $\mathbb{K}[x, y] \slash (x, y)^{n}$.
\end{proof}

The first part of the next proposition is a special case of a general classification result proved in $[11]$ by J. Sally. See also $[10]$.

\begin{proposition}
The sequences $$\mathrm{T}_2 = (1, 2, 1), \ \ \mathrm{T}_3 = (1, 2, 3, 1)$$ are of homogeneous finite type.
\end{proposition}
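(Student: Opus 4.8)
The plan is to treat each sequence separately, exhibiting in each case a small explicit list of graded ideals $I \subset \mathbb{K}[x,y]$ and showing that every graded algebra $R \simeq \mathbb{K}[x,y]/I$ with the given Hilbert–Samuel sequence is isomorphic to one of them. The key observation is that for a graded ideal the relevant data in degrees $\leq n+k$ is controlled by the vector spaces $I_d \subset \mathbb{K}[x,y]_d$, and that the group $\mathrm{PGL}(2)$ acts on each $\mathbb{K}[x,y]_d$ (as $\mathrm{Sym}^d$ of the standard representation) by change of coordinates in $x,y$, inducing isomorphisms of the corresponding algebras. So the problem reduces to: classify, up to this $\mathrm{PGL}(2)$-action, the chain of subspaces $I_2 \subset I_3 \subset \cdots$ of the prescribed dimensions that actually arise as components of a homogeneous ideal.

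For $\mathrm{T}_2 = (1,2,1)$: here $t_2 = 1$, so $\dim I_2 = 2$ (out of $\dim \mathbb{K}[x,y]_2 = 3$), and $I_3 = \mathbb{K}[x,y]_3$ since $t_3 = 0$. First I would note that $I$ is generated in degree $2$ because $I_2 \cdot \mathbb{K}[x,y]_1$ already has dimension $\geq 3$ inside the $4$-dimensional $\mathbb{K}[x,y]_3$ — actually I should check it fills all of $\mathbb{K}[x,y]_3$, which forces $I_{\geq 3}$ to be everything and means $I = (I_2)$. A two-dimensional subspace $W = I_2$ of the three-dimensional space $\langle x^2, xy, y^2\rangle$ is the kernel of a nonzero linear functional, i.e.\ is cut out by a single nonzero quadratic form $q$; the pencil of quadrics through $W$ is spanned by $q$ and we may instead describe $W = \ker$ of evaluation, but cleanly: $W$ is the orthogonal complement of a line $\mathbb{K} q^{\vee}$, and $\mathrm{PGL}(2)$ acts on $\mathbb{P}(\mathbb{K}[x,y]_2) = \mathbb{P}^2$ with exactly two orbits on the dual side according to whether the form $q^\vee$ (a binary quadratic) is a square or has distinct roots. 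So there are exactly two candidate ideals $I_2$; one must check both genuinely give the sequence $(1,2,1)$ (i.e.\ $\dim(I_2 \cdot \mathbb{K}[x,y]_1) = 4$ in both cases, which holds), yielding exactly two algebras, e.g.\ $\mathbb{K}[x,y]/(xy,x^2-y^2)$ and $\mathbb{K}[x,y]/(y^2, xy)$ — finitely many, as required.

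For $\mathrm{T}_3 = (1,2,3,1)$: now $t_2 = 3$ so $I_2 = 0$, $t_3 = 1$ so $\dim I_3 = 3$ inside the $4$-dimensional $\mathbb{K}[x,y]_3$, and $t_4 = 0$ so $I_4 = \mathbb{K}[x,y]_4$. The ideal is generated in degree $3$ (since $I_3 \cdot \mathbb{K}[x,y]_1$ has dimension $\geq 4$ inside the $5$-dimensional $\mathbb{K}[x,y]_4$ — I would verify it is all of $\mathbb{K}[x,y]_4$). A $3$-dimensional subspace of $\mathbb{K}[x,y]_3 \cong \mathrm{Sym}^3$ is the orthogonal complement of a line, i.e.\ corresponds to a single binary cubic $c$ up to scalar; so the classification of algebras reduces to the $\mathrm{PGL}(2)$-classification of binary cubics in $\mathbb{P}(\mathrm{Sym}^3) = \mathbb{P}^3$, of which there are finitely many orbits (triple root, double+simple root, three distinct roots), giving at most three algebras. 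As before one checks each genuinely realizes $(1,2,3,1)$. I would also invoke, as the excerpt notes, that $\mathrm{T}_2$ (and the finiteness for such short sequences generally) is covered by Sally's classification in [11], so the self-contained argument above is really just a streamlined verification.

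The main obstacle is not the orbit classification of binary forms — that is classical and finite — but the bookkeeping needed to confirm that a chosen subspace $I_d$ of the stated dimension actually propagates to a homogeneous ideal with \emph{exactly} the prescribed Hilbert–Samuel sequence, rather than a smaller one; concretely, verifying the multiplication maps $I_d \otimes \mathbb{K}[x,y]_1 \to \mathbb{K}[x,y]_{d+1}$ have the correct rank for every relevant $d$. For these very short sequences this is a finite, explicit check, so the obstacle is minor; the real content is simply recognizing that for $t_1 = 2$ the component-by-component data is governed by binary forms and their $\mathrm{PGL}(2)$-orbits.
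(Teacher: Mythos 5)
Your proposal is correct and is essentially the paper's argument in dual language: the paper's map $\theta(a\bar x+b\bar y)=(ax+by)^2$ (resp.\ $(ax+by)^3$) is exactly the binary quadratic (resp.\ cubic) form annihilating $I_2$ (resp.\ $I_3$), and both arguments then classify these forms up to $\mathrm{PGL}(2)$ by root multiplicities, yielding two and three classes respectively. The propagation check you flag is immediate here since $t_3=0$ (resp.\ $t_4=0$) forces $I\supseteq(x,y)^3$ (resp.\ $(x,y)^4$), so the ideal is determined by the single subspace and nothing further needs verifying.
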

\begin{proof}
Consider an algebra $R\simeq \mathbb{K}[x, y] \slash I$ for some ideal $I$ with $\mathrm{T}(R)=T_2$. Up to a scalar there is only one non-zero element $z \in \mathfrak{m}^2$. We define a map $$\theta: \mathfrak{m} / \mathfrak{m}^2 \rightarrow \mathfrak{m}^2 = \langle z \rangle \simeq \mathbb{K}$$ in the following way: $$\theta (a\bar{x} + b\bar{y}) = (a x + b y)^2 = (a^2 u_0 + 2ab u_1 + b^2 u_2)z$$ where $\bar{x}, \bar{y}$ are images of $x, y$ in $\mathfrak{m} / \mathfrak{m}^2$, $x^2 = u_0 z, xy = u_1 z, y^2 = u_2 z$, and $a, b, u_0, u_1, u_2 \in \mathbb{K}$. Notice that $\theta$ is non-zero (since the powers of linear forms span $\mathbb{K}[x, y]$ for any $n\geq 1$) map that is a polynomial in $a, b$ which we can think of as homogeneous coordinates on $\mathbb{P}^1$. The roots of this polynomial are two points in $\mathbb{P}^1$. If they are different then after a linear change of coordinates we may assume them to be $[1:0]$ and $[0:1]$. If they coincide we may assume it is $[1:0]$. In the first case we get an algebra $$\mathbb{K}[x, y]/(x^2, y^2)+(x, y)^3$$ and in the second case we get $$\mathbb{K}[x, y]/(xy, y^2)+(x, y)^3.$$

The case $\mathrm{T}_3$ is analogous. We define a map $$\theta: \mathfrak{m} / \mathfrak{m}^2 \rightarrow \mathfrak{m}^3 = \langle z \rangle \simeq \mathbb{K}$$ as follows: $$\theta (a\bar{x} + b\bar{y}) = (a x + b y)^3 = (a^3 u_0 + 3a^2b u_1 + 3 a b^2 u_2 + b^3 u_3)z$$ where $x^3 = u_0 z, x^2 y = u_1 z, x y^2 = u_2 z, y^3 = u_3 z$, and $a, b, u_0, u_1, u_2, u_3 \in \mathbb{K}$. Hence we get a non-zero  polynomial map of degree $3$ in coordinates $a, b$. We have three possibilities: all three roots of this polynomial are different, two of them coincide or all of them coincide. Hence we get three isomorphism classes of algebras of type $\mathrm{T}_3$.
\end{proof}

The next lemma is crucial for this section. It demonstrates that if there are several equal numbers $t_n = t_{n+1} = \dots = t_{n+k}$ for $k\geq 1$ in $\mathrm{T}$ then the ideal $I$ where $R = \mathbb{K}[x, y] \slash I$ must be of special form.

\begin{lemma}
Let $R = \mathbb{K}[x, y]/I$ be a local Artinian algebra with $I$ homogeneous. Let the Hilbert-Samuel sequence of $R$ be $$\mathrm{T}=(1, 2, t_2, \dots, t_{n-1}, s, s, \dots, s, t_{n+k+1}, \dots)$$ where $t_2, \dots, t_{n-1}$ are arbitrary, $t_n = \dots = t_{n+k} = s$, for $k\geq 1$ and arbitrary $s$. Then there is a polynomial $h\in \mathbb{K}[x, y]_s$ such that $(h)_n = I_n$.

\end{lemma}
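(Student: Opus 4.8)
The plan is to work with the homogeneous ideal $I$ degree by degree, exploiting the fact that $R=\mathbb{K}[x,y]/I$ is graded so that $I_m = \ker\big(\mathbb{K}[x,y]_m \to \mathfrak{m}^m/\mathfrak{m}^{m+1}\big)$ and hence $\dim I_m = \dim \mathbb{K}[x,y]_m - t_m = (m+1) - t_m$. In particular $\dim I_n = n+1 - s$ and $\dim I_{n-1} = n - t_{n-1} = 0$ (since $t_{n-1}=n$ by Proposition~1.4, as $n$ is the first index where the sequence stops being $i+1$), so $I$ is generated in degrees $\geq n$. The first task is to identify the structure of $I_n$: it is a subspace of $\mathbb{K}[x,y]_n$ of dimension $n+1-s$. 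I want to show that this subspace is exactly $(h)_n = h\cdot\mathbb{K}[x,y]_{n-s}$ for a single form $h$ of degree $s$, i.e.\ that all the degree-$n$ relations share a common factor of degree exactly $s$.

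The key leverage is the condition $t_n = t_{n+1} = \dots = t_{n+k} = s$ with $k \geq 1$, which forces $\dim I_{n+1} = n+2-s$, only one more than $\dim I_n$. Now $I_{n+1} \supseteq x\cdot I_n + y\cdot I_n$, and the multiplication-by-linear-forms map controls how $I_n$ propagates. The crucial classical fact about the polynomial ring in two variables (a Macaulay-type / apolarity statement) is: if $V \subseteq \mathbb{K}[x,y]_n$ is a subspace with $\dim V = r$, then $\dim(x V + y V) \geq r+1$, with equality if and only if $V = h\cdot \mathbb{K}[x,y]_{n-s}$ for some form $h$ of degree $s = n-r$ (equivalently, $V$ is the degree-$n$ part of the ideal $(h)$). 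One direction is the observation that $h\cdot\mathbb{K}[x,y]_{n-s}$ has $x,y$-span equal to $h\cdot\mathbb{K}[x,y]_{n-s+1}$ of dimension $n-s+2 = r+1$. For the converse I would argue: apply GCD considerations, letting $h = \gcd$ of $V$, write every element of $V$ as $h$ times something of degree $n - \deg h$, so $V = h\cdot W$ with $W\subseteq\mathbb{K}[x,y]_{n-\deg h}$ of dimension $r$ with $\gcd(W)=1$; then $xV+yV = h(xW+yW)$ and it suffices to see that a base-point-free subspace $W$ of $\mathbb{K}[x,y]_{d}$ of dimension $r$ (with $d \geq r-1$, forced since $W$ spans no common zero) satisfies $\dim(xW+yW) > r+1$ unless $d = r-1$, in which case $W$ is all of $\mathbb{K}[x,y]_{r-1}$ and $\deg h = s$ exactly. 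This is where I expect the main work to lie — the cleanest route is probably via the exact sequence $0 \to \ker \to W\oplus W \xrightarrow{(y,-x)} xW+yW \to 0$ and analyzing the kernel (a syzygy), or by directly using that $\mathbb{K}[x,y]$ is a UFD so the syzygy module of any subspace is generated in a controlled way.

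With that lemma in hand, the proof concludes quickly: $\dim I_n = n+1-s =: r$, and since $t_{n+1}=s$ we get $\dim I_{n+1} = n+2-s = r+1$; but $I_{n+1}\supseteq xI_n+yI_n$ which already has dimension $\geq r+1$, so $I_{n+1} = xI_n + yI_n$ and the equality case applies, giving $I_n = h\cdot\mathbb{K}[x,y]_{n-s} = (h)_n$ for a form $h$ of degree $s$. (One should check $\deg h$ is exactly $s$ and not smaller: if $\deg h < s$ then $\dim I_n = \dim h\cdot\mathbb{K}[x,y]_{n-\deg h} = n+1-\deg h > n+1-s$, a contradiction.) The only subtlety to keep in mind is the degenerate ranges — e.g.\ $s = 0$ (then $I_n = \mathbb{K}[x,y]_n$, take $h$ a unit, handled trivially) and $r = 0$ — but in the stated setup $n$ is defined as the first place the sequence departs from $(1,2,3,\dots)$ so $s \leq n$, and if $s = n$ then $\mathrm{T}$ would not have departed at index $n$; thus $0 \leq s \leq n-1$ and $r = n+1-s \geq 2$, putting us safely inside the range where the equality characterization is meaningful. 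The main obstacle, to repeat, is proving the sharp bound $\dim(xV+yV)\geq \dim V + 1$ together with its equality characterization; everything else is bookkeeping with Hilbert functions.
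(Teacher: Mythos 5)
Your plan is correct and its skeleton is the same as the paper's: both arguments reduce the lemma to the statement that a nonzero subspace $V\subseteq\mathbb{K}[x,y]_n$ satisfies $\dim(xV+yV)\geq\dim V+1$, with equality forcing $V=h\cdot\mathbb{K}[x,y]_{n-\deg h}$, and both apply this with $V=I_n$, using $xI_n+yI_n\subseteq I_{n+1}$ and $\dim I_{n+1}=\dim I_n+1$ (which is where $k\geq 1$ enters). The difference lies in how the equality case is established, which is precisely the step you defer as ``the main work.'' The paper does it by hand: pick a basis $f_1,\dots,f_r$ of $I_n$ in echelon form with strictly decreasing leading $x$-degrees, note that $xf_1,\dots,xf_r,yf_r$ are independent and hence span $I_{n+1}$, then compare leading terms in the expansion of each $yf_i$ to conclude (after a further change of basis) that $yf_i=xf_{i+1}$ for $i<r$; since $\mathbb{K}$ is algebraically closed, each $f_i$ splits into linear forms and these relations show $f_{i+1}=yf_i/x$, so all the $f_i$ share at least $n-(r-1)=s$ linear factors, while the gcd/dimension count you also perform gives the reverse bound. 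Your proposed syzygy route (the kernel of $(f,g)\mapsto xf+yg$ is $\{(yu,-xu)\}$ by unique factorization, reducing the equality case to an induction) would also work and is arguably cleaner, but as written it remains a sketch, so your text is a correct plan rather than a complete proof. Two minor slips: in your statement of the equality case $\deg h$ should be $n-r+1$, not $n-r$ (your own dimension count two lines later uses the correct value); and the lemma as stated allows $t_2,\dots,t_{n-1}$ to be arbitrary, so you cannot invoke $t_{n-1}=n$ there --- though the degenerate cases $r\leq 1$ you worry about are in fact harmless, since then $I_n$ is spanned by at most one form and the conclusion is immediate.
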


\begin{proof}
We have $\mathrm{dim}_{\mathbb{K}}\ I_n=n+1-s$. Suppose that all the polynomials in $I_n$ have exactly $m$ common linear factors, $0 \leq m\leq n$. Hence $$I_n=\langle h q_1, \dots, h q_{n+1-s} \rangle = h \langle q_1, \dots, q_{n+1-s} \rangle$$ where $h$ is the product of all the common linear factors and $q_i$ are polynomials of degree $n-m$ without common factors. It follows that $$\mathrm{dim}_{\mathbb{K}}\ I_n=n+1-s\leq n-m+1=\mathrm{dim}_{\mathbb{K}}\ \mathbb{K}[x, y]_{n-m}$$ and hence $m\leq s$.

On the other hand, since $t_{n+1}=s$ we have $\mathrm{dim}_{\mathbb{K}}\ I_{n+1}=n+2-s$, that is $\mathrm{dim}_{\mathbb{K}}\ I_{n+1}=\mathrm{dim}_{\mathbb{K}}\ I_{n}+1$. We can choose a basis $f_1, \dots, f_r$ for $I_n$ where $r=n+1-s$ such that $$f_i=x^{d_i} y^{n-d_i}+a_{i, d_i-1}x^{d_i-1} y^{n-d_i+1}+\dots+a_{i, 0} y^{n}$$ where $d_1> \dots > d_r$ for $1\leq i\leq r$ and $a_{i, j}\in \mathbb{K}$ are such that $a_{i, d_j}=0$ for $j > i$. Consider the following elements of $I_{n+1}$: $$xf_1, xf_2, \dots, xf_r, yf_r.$$ Since our basis elements have different $x$-degrees, these polynomials are linearly independent, and since $\mathrm{dim}_{\mathbb{K}}\ I_{n+1}=r+1$ it follows that $$\langle xf_1, xf_2, \dots, xf_r, yf_r \rangle = I_{n+1}.$$ 

Hence $yf_i$ for $1\leq i\leq r-1$ can be expressed as a linear combination of $xf_{i+1}, \dots, xf_r, yf_r$. But then, comparing the $x$-leading terms of these polynomials, we see that necessarily $d_i = d_{i+1} + 1$. Hence after linear change we may assume that our basis has the form (here $d = d_1$): $$f_1 = x^{d} y^{n - d} + a_{1, d-r} x^{d-r}y^{n-d+r} + \dots + a_{1, 0} y^n,$$ $$\dots$$ $$f_r = x^{d-r+1} y^{n - d + r - 1}  + a_{r, d-r} x^{d-r}y^{n-d+r} + \dots + a_{r, 0} y^n.$$

In what follows we show that $y f_1=x f_2$. Indeed, we know that for some $u_i \in \mathbb{K}$ $$yf_1=xf_2 + u_3 xf_3 + \dots + u_r x f_r + u_{r+1} y f_r.$$

But now substituting $f_i$ we have 
\begin{multline*}
x^{d} y^{n - d +1} + a_{1, d-r} x^{d-r}y^{n-d+r +1} + \dots + a_{1, 0} y^{n+1} = yf_1 =  \\ =  x^{d} y^{n-d+1} +u_3 x^{d-1} y^{n-d+2} + \dots + u_r x^{d-r+2} y^{n-d+r-1} + u_{r+1} x^{d-r+1}y^{n-d+r} + \\  + v_{r+2} x^{d-r} y^{n-d+r+1} + \dots + v_{d-2} y^{n+1}
\end{multline*}

for some $v_j \in \mathbb{K}$. Hence all the $u_i = 0$, and $yf_1=xf_2$ as claimed. Arguing as above we have $$yf_1=xf_2,\ yf_2=xf_3,\ \dots,\ yf_{r-1}=xf_r.$$ 

Hence the decomposition of $f_i$ into a product of linear forms differs from that of $f_1$ by at most $i-1$ linear forms, $2\leq i \leq r=n-s+1$. Hence all the $f_i$ have at least $n-(n-s)=s$ common linear factors, and then $m\geq s$. 

So we have $m=s$. Thus $\mathrm{dim}_{\mathbb{K}}\ I_n=\mathrm{dim}_{\mathbb{K}}\ \mathbb{K}[x, y]_{n-m}$ and $I_n=h \mathbb{K}[x, y]_{n-m} = (h)_n$ for $h\in \mathbb{K}[x, y]_s$ as claimed. 
\end{proof}

Let us apply this result to the remaining sequences.

\begin{proposition}
The sequence $$\mathrm{T}=(1, 2, \dots, n, s, s, \dots, s)$$ where $t_i=i+1$ for $0 \leq i \leq n-1$, $t_{n}=\dots=t_{n+k}=s$, $k\geq 1$ is of homogeneous finite type for $s\leq 3$, $n\geq2$. This sequence corresponds to the types $\mathrm{T}_5,\ \mathrm{T}_6,\  \mathrm{T}_8$.
\end{proposition}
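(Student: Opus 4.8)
The plan is to use Lemma 3.3 to show that every algebra of type $\mathrm{T}$ is determined by a single binary form of degree $s$, and then to count such forms up to linear change of coordinates; this count is finite precisely because $s\le 3$.

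First I would pin down the ideal $I$ of $R\simeq\mathbb{K}[x,y]/I$. Since $t_i=i+1$ for $i\le n-1$ we have $I_j=0$ for $j<n$, and since $t_j=0$ for $j>n+k$ we have $I_j=\mathbb{K}[x,y]_j$ for $j>n+k$. Lemma 3.3 applies (with the arbitrary $t_2,\dots,t_{n-1}$ specialized to $i+1$), so $I_n=(h)_n=h\,\mathbb{K}[x,y]_{n-s}$ for some $h\in\mathbb{K}[x,y]_s$, and $h\neq0$ because $\dim_{\mathbb{K}}I_n=n+1-s\geq 1$ (here $s\le t_{n-1}=n$). For $n\le j\le n+k$ one has $(h)_j=I_n\cdot\mathbb{K}[x,y]_{j-n}\subseteq I_j$, using $\mathbb{K}[x,y]_{n-s}\cdot\mathbb{K}[x,y]_{j-n}=\mathbb{K}[x,y]_{j-s}$, while $\dim_{\mathbb{K}}(h)_j=j-s+1=\dim_{\mathbb{K}}I_j$; hence $I_j=(h)_j$. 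Therefore
\[
R\simeq R_h:=\mathbb{K}[x,y]\big/\big(h\,(x,y)^{n-s}+(x,y)^{n+k+1}\big),
\]
and conversely a direct dimension count shows that $R_h$ has Hilbert--Samuel sequence $\mathrm{T}$ for every nonzero $h\in\mathbb{K}[x,y]_s$. Thus the graded algebras (generated in degree $1$) of type $\mathrm{T}$ are precisely the $R_h$.

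Next I would relate isomorphism of the $R_h$ to an orbit problem. A linear change of variables $g\in\mathrm{GL}(2)$ on $\mathbb{K}[x,y]_1$ carries $I_h$ onto $I_{g\cdot h}$ and so induces an isomorphism $R_h\simeq R_{g\cdot h}$; hence the number of isomorphism classes of algebras of type $\mathrm{T}$ is at most the number of $\mathrm{GL}(2)$-orbits on $\mathbb{K}[x,y]_s\setminus\{0\}$, equivalently the number of $\mathrm{PGL}(2)$-orbits on $\mathbb{P}\big(\mathbb{K}[x,y]_s\big)=\mathbb{P}^s$. (The count is in fact exact: since $R_h$ coincides with its own associated graded algebra, every isomorphism is graded, hence induced by some $g\in\mathrm{GL}(2)$, and $I_{h_1}=I_{h_2}$ forces $h_1$ and $h_2$ proportional by comparing degree-$n$ parts and taking gcd's, using $n\ge s$.)

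Finally, the orbit count: over the algebraically closed field $\mathbb{K}$ a nonzero binary form of degree $s$ is a product of $s$ linear forms, and since $\mathrm{PGL}(2)$ acts $3$-transitively on $\mathbb{P}^1$, for $s\le 3$ the orbit of $h$ is determined by the partition of $s$ recording the multiplicities of its linear factors. Hence there are $p(s)$ orbits, $p(1)=1$, $p(2)=2$, $p(3)=3$, so finitely many isomorphism classes, matching $\mathrm{T}_5,\mathrm{T}_6,\mathrm{T}_8$ respectively. The only genuinely external input is Lemma 3.3; granting it, the step needing care is the bookkeeping that recovers $I$ from its single degree-$n$ component $(h)_n$ (so that the moduli really reduces to $h$ up to $\mathrm{GL}(2)$), together with the elementary facts that $\mathbb{K}[x,y]_a\cdot\mathbb{K}[x,y]_b=\mathbb{K}[x,y]_{a+b}$ and that $h\,\mathbb{K}[x,y]_m$ determines $h$ up to scalar. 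The $3$-transitivity of $\mathrm{PGL}(2)$ is exactly what fixes the threshold at $s\le 3$.
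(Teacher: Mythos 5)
Your proposal is correct and follows essentially the same route as the paper: apply Lemma 3.3 to get $I_n=(h)_n$ with $h\in\mathbb{K}[x,y]_s$, check by a dimension count that $I$ is then determined by $h$ in all degrees, and conclude because binary forms of degree $s\le 3$ fall into finitely many $\mathrm{GL}(2)$-orbits. You merely make explicit two points the paper leaves implicit — the converse construction of $R_h$ and the $3$-transitivity argument behind the finiteness of the orbit count — which is a welcome but not essentially different elaboration.
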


\begin{proof}
Let $R = \mathbb{K}[x, y]/I$ be a local Artinian algebra with $I$ homogeneous and $\mathrm{T}(R) = T$. By the previous lemma we know that $I_n = (h)_n$ for some polynomial $h\in \mathbb{K}[x, y]_s$. Moreover, $I$ is generated by its $n$-th component $I_n$ as an ideal. Indeed, let $J = (I_n)$. Then $J \subseteq I$ and $n + 2 -s \leq \mathrm{dim}_{\mathbb{K}}\ J_{n+1} \leq  \mathrm{dim}_{\mathbb{K}}\ I_{n+1} = n + 2 -s$ and hence $J_{n+1} = I_{n+1}$. We proceed by induction and see that $I = J$.

Hence there is one-to-one correspondence between subspaces $I_n$ (that in this case determine the ideal $I$ and hence the algebra $R$) and polynomials $h$ of degree $s\leq 3$. Since up to change of coordinates there are finitely many such polynomials, we get that this sequence is of homogeneous finite type. 
\end{proof}

\begin{proposition}
The following sequences are of homogeneous finite type for $n\geq 3$:

$$\mathrm{T}_9=(1, 2, 3, \dots, n-1, n, 3, 3, \dots, 3, 1, 1, \dots, 1),$$ where $t_n=\dots=t_{n+k}=3,\ t_{n+k+1}=\dots=t_{n+k+l}=1,\ n\geq3,\ k\geq 1,\ l\geq 2;$

\
$$\mathrm{T}_{10}=(1, 2, 3, \dots, n-1, n, 3, 3, \dots, 3, 2, 2, \dots, 2),$$ where $t_n=\dots=t_{n+k}=3,\  t_{n+k+1}=\dots=t_{n+k+l}=2,\ n\geq2,\ k\geq 1,\ l\geq 2;$

\
$$\mathrm{T}_{11}=(1, 2, 3, \dots, n-1, n, 3, 3, \dots, 3, 2, 2, \dots, 2, 1, 1, \dots, 1),$$ where $t_n=\dots=t_{n+k}=3,\  t_{n+k+1} = \dots = t_{n+k+l}=2,\ t_{n+k+l+1}= \dots = t_{n+k+l+s}=1,$
\ 
$n\geq 3,\ k\geq 1,\ l\geq 2,\ s\geq 2.$
\end{proposition}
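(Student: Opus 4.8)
The plan is to treat $\mathrm{T}_9$, $\mathrm{T}_{10}$, $\mathrm{T}_{11}$ uniformly by applying the preceding lemma at the first index of \emph{every} maximal constant block of the Hilbert--Samuel sequence. Write $R=\mathbb{K}[x,y]/I$ with $I$ homogeneous and $\mathrm{T}(R)$ one of these three sequences. Under the stated hypotheses every constant block $t_a=\dots=t_{a+j}$ of $\mathrm{T}(R)$ has length $j+1\geq 2$, so the lemma applies at its first index $a$ and yields a form $f_a$ of degree $t_a$ with $I_a=(f_a)_a$. Thus the block of $3$'s starting at $n$ gives a cubic $h$ with $I_n=(h)_n$; the next block (of $2$'s for $\mathrm{T}_{10}$ and $\mathrm{T}_{11}$, of $1$'s for $\mathrm{T}_9$), starting at $n+k+1$, gives a form $g$ with $I_{n+k+1}=(g)_{n+k+1}$ and $\deg g\in\{1,2\}$; and for $\mathrm{T}_{11}$ the last block of $1$'s, starting at $n+k+l+1$, gives a linear form $e$ with $I_{n+k+l+1}=(e)_{n+k+l+1}$. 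A dimension count forces each of these forms to be nonzero, hence to have exactly the degree of its block, and each equals the greatest common divisor of the forms in the corresponding component, so the collection $\{h,g\}$, resp. $\{h,g,e\}$, is attached to $R$ canonically up to a linear change of the variables $x,y$ and nonzero scalars.

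The next step is to read off divisibility relations between these forms. Whenever $a<b$ are two indices at which the lemma was applied, the inclusion $(I_a)_b\subseteq I_b=(f_b)_b$ says that $f_a\,q\in f_b\cdot\mathbb{K}[x,y]_{b-\deg f_b}$ for every form $q$ of degree $b-\deg f_a$; choosing $q$ coprime to $f_b$ yields $f_b\mid f_a$. Applied to consecutive blocks this gives $g\mid h$ in all three cases, together with $e\mid g$ for $\mathrm{T}_{11}$. In other words $R$ determines a chain of forms: $g\mid h$ with $\deg g\in\{1,2\}$ and $\deg h=3$ for $\mathrm{T}_9$ and $\mathrm{T}_{10}$, or $e\mid g\mid h$ with $\deg e=1$, $\deg g=2$, $\deg h=3$ for $\mathrm{T}_{11}$.

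The core of the argument is then that $I$ is recovered from this chain of forms together with the numerical parameters $n,k,l,s$, which are fixed once $\mathrm{T}$ is. Indeed, $I_j=0$ for $j<n$ because $t_j=j+1$ there; for $j$ inside a constant block with first index $a$ one has $(I_a)_j\subseteq I_j$ while $\dim_{\mathbb{K}}(I_a)_j=(j-\deg f_a)+1=j+1-t_j=\dim_{\mathbb{K}}I_j$, forcing $I_j=(f_a)_j$; at the transition from one block to the next the generating form simply switches from $f_a$ to $f_{a'}$, the inclusion $(I_a)_{a'}\subseteq(f_{a'})_{a'}$ needed for consistency being precisely the divisibility $f_{a'}\mid f_a$ established above; and $I_j=\mathbb{K}[x,y]_j$ for every $j$ past the last block, where $t_j=0$. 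Hence the homogeneous ideal $I$, and therefore the isomorphism class of the graded algebra $R$, is completely determined by the chain of forms.

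It remains to observe that there are only finitely many such chains up to $\mathrm{GL}_2(\mathbb{K})$. Over an algebraically closed field of characteristic zero a binary cubic has finitely many $\mathrm{GL}_2$-orbits, classified by the multiplicity pattern of its roots, and for a fixed cubic there are finitely many quadratic and linear divisors, falling into finitely many orbits under the stabilizer. Since isomorphic graded algebras generated in degree $1$ correspond to homogeneous ideals related by a linear change of coordinates, this bounds the number of isomorphism classes with a prescribed sequence, proving that $\mathrm{T}_9$, $\mathrm{T}_{10}$, $\mathrm{T}_{11}$ are of homogeneous finite type. The main obstacle I anticipate is the bookkeeping of the third paragraph: one must check carefully that the constant blocks leave no continuous moduli in the components $I_j$ beyond the discrete data --- the root patterns of $h$ and which of its factors $g$ and $e$ absorb --- and that the two or three generators interact with each other only through the divisibility relations, so that nothing else survives the coordinate change.
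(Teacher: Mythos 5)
Your proposal is correct and follows essentially the same route as the paper's proof: apply the preceding lemma at the start of each constant block to obtain forms $h, g$ (and $e$) generating the corresponding graded components, deduce the divisibility chain from the inclusions between components of $I$, check by a dimension count that the chain determines $I$, and conclude by the finiteness of such chains up to linear change of coordinates. Your write-up is somewhat more explicit than the paper's (notably the coprime-factor argument for divisibility and the block-by-block reconstruction of $I$), but the underlying argument is the same.
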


\begin{proof}
Let $R = \mathbb{K}[x, y]/I$ be a local Artinian algebra with $I$ homogeneous and $\mathrm{T}(R) = T_9$. By Lemma $3.3$, $I_n = (f)_n$ for some $f\in \mathbb{K}[x, y]_3$ and $I_{n+k+1} = (h)_{n+k+1}$ for some linear form $h$. Counting dimensions as in the proof of the previous proposition we see that $I_{n+i} = (f)_{n+i}$ for $0\leq i \leq k$ and $I_{n+k+i} = (h)_{n+k+i}$ for $i \geq 1$. Thus $f$ and $h$ define $I$ uniquely. We have $(f)_{n+k+1} \subseteq I_{n+k+1} = (h)_{n+k+1}$ and hence $h$ divides $f$. Since $\mathrm{deg}\ f = 3$ up to a change of coordinates there are finitely many possibilities to choose a pair $(f, h)$ such that $h$ divides $f$, and thus $\mathrm{T}_9$ is of homogeneous finite type.

The case $\mathrm{T}_{10}$ is analogous, with polynomial $h$ being of degree $2$.

In the case $\mathrm{T}_{11}$ arguing by analogy we have a triple $(f, g, h)$ where $f\in \mathbb{K}[x, y]_3$, $g\in \mathbb{K}[x, y]_2$, $f\in \mathbb{K}[x, y]_1$, $h$ divides $g$, $g$ divides $f$, and $I_n = (f)_n$, $I_{n+k+1} = (g)_{n+k+1}$, $I_{n+k+l+1} = (h)_{n+k+l+1}$. Again, as up to a change of coordinates there are finitely many choices of such a triple we get that $\mathrm{T}_9$ is of homogeneous finite type.

\end{proof}

\begin{proposition}
The sequence $$\mathrm{T}_4=(1, 2, 3, 2, 1, 1, \dots , 1),$$ where $t_3=2, t_4=\dots=t_{4+k}=1,\ k\geq 1$ is of homogeneous finite type.
\end{proposition}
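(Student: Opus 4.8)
Here is the plan I would follow to prove Proposition~$3.6$.

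\smallskip

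The strategy is to show that the homogeneous ideal $I$ with $R=\mathbb{K}[x,y]/I$ of type $\mathrm{T}_4$ is pinned down by a very small amount of data, and then that only finitely many values of that data occur up to $\mathrm{PGL}(2)$. First I would apply Lemma~$3.3$ to the tail of $\mathrm{T}_4$: since $t_4=t_5=\dots=t_{4+k}=1$ with $k\geq 1$, the lemma (applied with its ``$n$'' equal to $4$ and ``$s$'' equal to $1$) produces a linear form $h\in\mathbb{K}[x,y]_1$ with $I_4=(h)_4=h\,\mathbb{K}[x,y]_3$. Because $I$ is an ideal we have $xI_3+yI_3\subseteq I_4=h\,\mathbb{K}[x,y]_3$, and since $h$ is irreducible while $x,y$ are coprime this forces $h\mid f$ for every $f\in I_3$; hence $I_3=hV$ for a two–dimensional subspace $V\subseteq\mathbb{K}[x,y]_2$. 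A dimension count exactly parallel to the proofs of Propositions~$3.4$ and~$3.5$ then gives $I_{4+i}=h\,\mathbb{K}[x,y]_{3+i}$ for $0\leq i\leq k$ and $I_j=\mathbb{K}[x,y]_j$ for $j\geq 5+k$, so that
$$ I=(I_3)+(I_4)+\mathfrak{m}^{5+k}=(hV)+\bigl(h\,\mathbb{K}[x,y]_3\bigr)+\mathfrak{m}^{5+k}. $$
In particular $I$ is completely determined by the pair $(h,V)$ (with $h$ taken up to scalar), and conversely a direct check of Hilbert functions shows that every such pair yields an ideal of type $\mathrm{T}_4$. Since an isomorphism of graded algebras generated in degree $1$ is induced by a linear change of coordinates, the isomorphism classes of type $\mathrm{T}_4$ are therefore among the $\mathrm{PGL}(2)$–orbits on pairs $(h,V)\in\mathbb{P}(\mathbb{K}[x,y]_1)\times\mathrm{Gr}\bigl(2,\mathbb{K}[x,y]_2\bigr)$.

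\smallskip

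It then remains to see that there are only finitely many such orbits, and this is the heart of the argument. I would first classify the two–dimensional subspaces $V\subseteq\mathbb{K}[x,y]_2$, i.e.\ the pencils of binary quadratic forms, up to $\mathrm{PGL}(2)$: either every form in $V$ has a common linear factor, in which case $V=\ell\,\mathbb{K}[x,y]_1$ and $V$ is equivalent to $\langle x^2,xy\rangle$; or not, in which case $V$ contains the squares of two independent linear forms — the discriminant of a general member is a binary quadratic in the pencil parameter and so has a zero, producing a square member, and if there were only one such member one is back in the first case — so that $V$ is equivalent to $\langle x^2,y^2\rangle$. For each of these two subspaces I would compute its stabilizer in $\mathrm{PGL}(2)$: a Borel subgroup in the first case (the stabilizer of the common base point), and the normalizer of a maximal torus in the second (the stabilizer of the unordered pair of the two double points). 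Each of these subgroups acts on the $\mathbb{P}^1$ of linear forms $h$ with finitely many orbits — in fact with exactly two in each case — so the number of $\mathrm{PGL}(2)$–orbits on pairs $(h,V)$ is finite. This proves $\mathrm{T}_4$ is of homogeneous finite type.

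\smallskip

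The step I expect to require the most care is the last one: the list of $\mathrm{PGL}(2)$–orbits of pencils of binary quadratic forms, together with a marked linear form, is elementary but one must argue that the list is complete (the discriminant/square-member argument) and that the stabilizer computations genuinely give the two–orbit decomposition of $\mathbb{P}^1$. By contrast, the invocation of Lemma~$3.3$, the deduction $I_3=hV$ from the ideal axioms, and the dimension counts reconstructing $I$ from $(h,V)$ are routine and mirror what was already done for the earlier sequences.
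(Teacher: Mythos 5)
Your proposal is correct, and its first half --- invoking Lemma 3.3 with its $n=4$ and $s=1$ to get $I_4=h\,\mathbb{K}[x,y]_3$, deducing $I_3=hV$ from $xI_3+yI_3\subseteq I_4$, and reconstructing all of $I$ from the pair $(h,V)$ by dimension counts --- is exactly the paper's route. Where you genuinely diverge is in the finiteness count. The paper normalizes $h=x$ and then runs a hands-on case analysis on the coefficients of a basis of $V$, producing a short (and slightly redundant) list of normal forms such as $\langle xy,y^2\rangle$, $\langle x^2+xy,y^2\rangle$, $\langle x^2,xy\rangle$, $\langle x^2,y^2\rangle$. You instead classify the pencils $V$ up to all of $\mathrm{PGL}(2)$ first --- two orbits, the lines tangent or secant to the discriminant conic in $\mathbb{P}(\mathbb{K}[x,y]_2)$, which is precisely your dichotomy ``common linear factor'' versus ``two independent squares'' --- and then let the stabilizer (a Borel, resp.\ the normalizer of a torus) act on the $\mathbb{P}^1$ of choices of $h$, getting two orbits each time and hence exactly four orbits of pairs. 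Your organization is arguably cleaner: it keeps the pair $(h,V)$ together throughout, whereas some of the paper's normalizations (e.g.\ the substitution $x\mapsto x+uy$) do not preserve $h=x$, so the paper's list is best read as merely witnessing finiteness rather than as a set of normal forms for the pairs. Both arguments are elementary and both suffice for the proposition; yours additionally pins down the number of isomorphism classes (at most four, and exactly four once one checks the four pairs give non-isomorphic algebras via Lemma 4.1) without redundancy.
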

\begin{proof}
Let $R = \mathbb{K}[x, y]/I$ be a local Artinian algebra with $I$ homogeneous and $\mathrm{T}(R) = T_4$. By Lemma $3.3$ we have $$I_4=h\langle q_1, q_2, q_3, q_4 \rangle \simeq \langle q_1, q_2, q_3, q_4 \rangle \subseteq \mathbb{K}[x, y]_3$$ where $h$ is a linear form and the $q_i$ are linearly independent forms with no common factors. Thus they span $\mathbb{K}[x, y]_3$. We may assume that $h=x$. Notice that $y I_3 \subseteq I_4$, thus each form in $I_3$ is a multiple of $x$. To choose a subspace $I_3$ in $\mathbb{K}[x, y]_3$ that consists of polynomials that have $x$ as a factor is the same as to choose two non-proportional polynomials $ax^2+bxy+cy^2$ and $dx^2+exy+fy^2$ with $a, b, c, d, e, f \in \mathbb{K}$.

If $a=d=0$ we have the subspace $\langle xy, y^2\rangle$. If $a\neq 0$ or $d\neq 0$ we can assume this subspace to be $\langle x^2+cy^2, xy+fy^2 \rangle$ or $\langle x^2+bxy, y^2 \rangle$. In the second case after rescaling $y$ we have $\langle x^2+xy, y^2 \rangle$. 

Now consider the first case. If $c= 0$ then after rescaling $y$ we have $\langle x^2, xy + y^2 \rangle$ in the case $f\neq 0$ and $\langle x^2, xy\rangle$ in the case $f=0$. If $c\neq0$ we may add the second polynomial multiplied by a constant to the first one to get the full square $(x+uy)^2$ for some $u\in \mathbb{K}$. We make a linear change: $x \rightarrow x + uy, y \rightarrow y$. Then we have $\langle x^2, y^2+wxy\rangle$, $w\in \mathbb{K}$. If $w\neq 0$ then after  rescaling we get $\langle x^2, x^2+xy \rangle$ if $w\neq 0$ and if $w=0$ we get $\langle x^2, y^2 \rangle$. Hence there are a finite number of isomorphism classes.
\end{proof}

\begin{proposition}
The sequence $$\mathrm{T}_7=(1, 2, 3, \dots, n, 2, 2, \dots, 2, 1, \dots, 1)$$ with $t_i=i+1,\ 0\leq i \leq n-1,\ t_n=\dots=t_{n+k}=2,\ t_{n+k+1}=\dots=t_{n+k+1+l}=1,\ n\geq 1,\ k\geq 1,\ l\geq 0$ is of homogeneous finite type.
\end{proposition}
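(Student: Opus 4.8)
The plan is to compress $I$ to a finite amount of data by two applications of Lemma~3.3, in the spirit of the proofs of Propositions~3.4 and 3.5, and then to treat by hand the one genuinely borderline configuration.

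First I would apply Lemma~3.3 to the block $t_n=\dots=t_{n+k}=2$, which is legitimate since $k\geq 1$; this produces a form $h\in\mathbb{K}[x,y]_2$ with $I_n=(h)_n$, and the dimension count from the proof of Proposition~3.4 propagates it to $I_{n+i}=(h)_{n+i}$ for $0\leq i\leq k$. Since $t_j=j+1$ for $j<n$ we have $I_j=0$ there, and since the sequence terminates we have $I_j=\mathbb{K}[x,y]_j$ beyond the tail; so the whole ideal, hence $R$, is recovered once the finitely many components $I_{n+k+1},I_{n+k+2},\dots$ lying over the trailing string of $1$'s are known. The relevant forms are intrinsic to $R$ (for instance $h$ is, up to a scalar, the greatest common divisor of $I_n$), so the data attached to $R$ below is well defined up to a coordinate change, and it is enough to bound the number of its coordinate-change orbits.

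If the trailing string of $1$'s has length at least $2$, I would apply Lemma~3.3 once more, with $s=1$ and the role of $n$ played by $n+k+1$; this yields a linear form $g$ with $I_{n+k+1}=(g)_{n+k+1}$, again propagated over the whole block by a dimension count. Feeding into the inclusion $(h)_{n+k+1}\subseteq I_{n+k+1}=(g)_{n+k+1}$ an element $hq$ with $q$ a degree-$(n+k-1)$ form coprime to $g$ (available because $n+k-1\geq 1$) forces $g\mid h$. So $I$ is determined by a pair $(h,g)$ with $\deg h=2$, $\deg g=1$ and $g\mid h$, of which there are finitely many up to a coordinate change; hence only finitely many isomorphism classes of $R$ occur in this case.

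The case of a single trailing $1$ is the hard one, and the orbit count it requires is where I expect the real work to lie, since it is exactly here that Proposition~2.1 gives $\dim G_{\mathrm T}=3=\dim\mathrm{PGL}(2)$, so finiteness does not follow from a crude dimension bound. In this case $I_{n+k+1}$ is a hyperplane in $\mathbb{K}[x,y]_{n+k+1}$ containing the codimension-$2$ subspace $(h)_{n+k+1}$, so it runs over the $\mathbb{P}^1$ of hyperplanes through that subspace, and $R$ is recovered from $h$ together with this point. Normalising the form $h$ to $x^2$ or $xy$ (the only degree-$2$ binary forms up to a linear change of coordinates and a scalar), I would check that the stabiliser of the ideal $(h)$ in $\mathrm{PGL}(2)$ acts on this $\mathbb{P}^1$ with finitely many orbits: for $(xy)$ the stabiliser contains the diagonal torus, which acts with distinct weights on the $2$-dimensional quotient $\mathbb{K}[x,y]_{n+k+1}/(xy)_{n+k+1}=\langle\overline{x^{n+k+1}},\overline{y^{n+k+1}}\rangle$ and hence has three orbits on $\mathbb{P}^1$; for $(x^2)$ the stabiliser is the Borel subgroup fixing the point where $x$ vanishes, and writing its action in the basis $\overline{y^{n+k+1}},\overline{xy^{n+k}}$ of the quotient shows that it has two orbits. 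In either case finitely many isomorphism classes of $R$ arise, which completes the proof.
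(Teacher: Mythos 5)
Your argument is correct and is essentially the paper's: Lemma 3.3 gives $I_n=(f)_n$ with $f$ quadratic (your $h$), and the only remaining freedom is one extra generator of $I_{n+k+1}$ modulo $(f)_{n+k+1}$, normalized by the stabilizer of $(f)$ --- your orbit counts on the two-dimensional quotient (at most $3$ orbits for $f=xy$, $2$ for $f=x^2$) correspond exactly to the paper's explicit normal forms $ax^{n+k+1}+by^{n+k+1}$, resp.\ $axy^{n+k}+by^{n+k+1}$. The only divergence is that the paper does not split on the length of the tail of $1$'s: the description of $I_{n+k+1}$ as a hyperplane containing $(f)_{n+k+1}$, with all higher components then forced by the dimension count, works for every $l\geq 0$, so your separate divisibility argument for $l\geq 1$ (modelled on Proposition 3.5) is sound but not needed.
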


\begin{proof}
Let $R = \mathbb{K}[x, y]/I$ be a local Artinian algebra with $I$ homogeneous and $\mathrm{T}(R) = T_7$. We have $I_n= (f)_n$ for $f \in \mathbb{K}[x, y]_2$ so we may assume first that $f$ has two distinct roots, $f=xy$. We have $t_{n+k}=2, t_{n+k+1}=1$ so $I_{n+k}=(f)_{n+k}$, $I_{n+k+1}=(f)_{n+k+1} + \langle h \rangle$ for some homogeneous polynomial $h$ of degree $n+k+1$. Since $$(f)_{n+k+1}=(xy)_{n+k+1}=\langle x^{n+k}y, x^{n+k-1}y^2, \dots, xy^{n+k}\rangle$$ we may assume that $h=ax^{n+k+1}+by^{n+k+1}$ for $a, b\in \mathbb{K}$. Rescaling $x$ and $y$ we get $h=x^{n+k+1}+y^{n+k+1}$ or (permuting $x$ and $y$ if necessary) $h=x^{n+k+1}$. 

Now assume $f=x^2$. In the notation as above we can assume that $h=axy^{n+k}+by^{n+k+1}$. Again by a linear change we can get either $h=xy^{n+k}+y^{n+k+1}$ or $h=xy^{n+k}$ or $h=y^{n+k+1}$.

Thus we have a finite number of possibilities, and hence the sequence $\mathrm{T}$ is of finite type.
\end{proof}

\section{Proof of the main result}

We start with the following lemma.

\begin{lemma}
If two graded Artinian quotients $R_1 \simeq \mathbb{K}[x, y] / I_1$ and $R_2 \simeq \mathbb{K}[x, y] / I_2$, where $I_1$ and $I_2$ are homogeneous ideals, are isomorphic then they are isomorphic as graded algebras, by which we mean an isomorphism that preserves grading on both algebras.  
\end{lemma}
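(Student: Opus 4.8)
The plan is to promote an arbitrary algebra isomorphism to a graded one by passing through the associated graded algebras. Let $\phi\colon R_1\to R_2$ be a $\mathbb{K}$-algebra isomorphism and let $\mathfrak{m}_1,\mathfrak{m}_2$ denote the maximal ideals of $R_1,R_2$. Since $\phi$ carries units to units and non-units to non-units, it maps the (unique) maximal ideal of $R_1$ onto that of $R_2$; hence $\phi(\mathfrak{m}_1)=\mathfrak{m}_2$, and as $\mathfrak{m}_1^k$ is generated by $k$-fold products of elements of $\mathfrak{m}_1$, also $\phi(\mathfrak{m}_1^k)=\mathfrak{m}_2^k$ for every $k\geq 0$.

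First I would check that $\phi$ respects the $\mathfrak{m}$-adic filtrations and therefore induces an isomorphism of the associated graded algebras. For each $k$, restriction of $\phi$ gives an isomorphism $\mathfrak{m}_1^k\to\mathfrak{m}_2^k$ carrying $\mathfrak{m}_1^{k+1}$ onto $\mathfrak{m}_2^{k+1}$, hence a $\mathbb{K}$-linear isomorphism $\bar\phi_k\colon \mathfrak{m}_1^k/\mathfrak{m}_1^{k+1}\to\mathfrak{m}_2^k/\mathfrak{m}_2^{k+1}$. Multiplicativity of $\phi$ together with $\phi(\mathfrak{m}_1^k)\subseteq\mathfrak{m}_2^k$ shows that $\bar\phi_k(\bar u)\,\bar\phi_l(\bar v)=\overline{\phi(uv)}$ for $u\in\mathfrak{m}_1^k$, $v\in\mathfrak{m}_1^l$, so the maps $(\bar\phi_k)_{k\geq0}$ together form an isomorphism of graded algebras $\mathrm{Gr}(\phi)\colon\mathrm{Gr}(R_1)\to\mathrm{Gr}(R_2)$.

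Next I would invoke the homogeneity of $I_1$ and $I_2$. As remarked right after the definition of a homogeneous ideal, a local algebra that is a quotient of a polynomial ring by a homogeneous ideal is canonically isomorphic to its own associated graded algebra: writing $(R_i)_j$ for the degree-$j$ component, $R_i$ is generated in degree $1$, so $\mathfrak{m}_i^k=\bigoplus_{j\geq k}(R_i)_j$, and the natural isomorphisms $(R_i)_k\to\mathfrak{m}_i^k/\mathfrak{m}_i^{k+1}$ glue to a graded isomorphism $\iota_i\colon R_i\to\mathrm{Gr}(R_i)$. Then
\[
\iota_2^{-1}\circ\mathrm{Gr}(\phi)\circ\iota_1\colon R_1\longrightarrow R_2
\]
is a composite of graded-algebra isomorphisms, hence itself a graded isomorphism $R_1\cong R_2$, which is the assertion of the lemma.

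I expect the only delicate point to be the verification in the second paragraph that $\mathrm{Gr}(\phi)$ is a ring homomorphism rather than merely a graded linear isomorphism; this is purely formal, relying on $\phi$ being a filtered ring map, and there is no genuine obstacle. The homogeneity hypothesis is exactly what supplies the graded identifications $R_i\cong\mathrm{Gr}(R_i)$, and with those in hand the conclusion is immediate.
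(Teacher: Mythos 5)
Your proof is correct, and it reaches the paper's conclusion by a slightly different, more functorial route. The paper's own argument is the hands-on version of the same idea: it writes $\phi(x)=ax+by+f$, $\phi(y)=cx+dy+g$ with $f,g$ of degree at least two, defines $\psi$ on generators by the linear parts $ax+by$ and $cx+dy$, and then checks directly that $\psi$ is a well-defined homomorphism (this is where the homogeneity of $I_1$ and $I_2$ enters) and bijective (surjectivity via the induced maps on $\mathfrak{m}^k/\mathfrak{m}^{k+1}$, injectivity by comparing dimensions). Your composite $\iota_2^{-1}\circ\mathrm{Gr}(\phi)\circ\iota_1$ is exactly this map $\psi$, but packaging it through the associated graded construction makes well-definedness, multiplicativity, and bijectivity automatic: $\mathrm{Gr}(\phi)$ is assembled from isomorphisms on each graded piece, and homogeneity of the ideals is used only to supply the graded identifications $R_i\cong\mathrm{Gr}(R_i)$. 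What your version buys is a cleaner verification that generalizes verbatim to any number of variables and indeed to any local Artinian algebras isomorphic to their associated graded rings; what the paper's version buys is the explicit description of the graded isomorphism as a linear change of coordinates, which is what is actually needed immediately afterwards to lift it to a linear automorphism of $\mathbb{K}[x,y]$ and obtain the $\mathrm{PGL}(2)$-action on $G_{\mathrm{T}}$ in the proof of Theorem 1. If you wanted to use your argument in that context you would still need to observe that a degree-preserving isomorphism of $\mathbb{K}[x,y]/I_1$ onto $\mathbb{K}[x,y]/I_2$ is determined in degree one by an invertible linear substitution, which is precisely the datum the paper extracts at the outset.
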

\begin{proof}

Suppose that $\phi: R_1 \rightarrow R_2$ is a given isomorphism. We have $\phi(x) = ax + by + f$, $\phi(y) = cx + dy + g$ for some $a, b, c, d \in \mathbb{K}$ and $f, g \in \mathbb{K}[x, y]$ of degree two or more. Clearly, elements of the form $x^{k_1} y^{k_2}$ with $k_1+k_2=k$ generate $\mathfrak{m}_i^k \slash \mathfrak{m}_i^{k+1} \simeq \mathbb{K}[x, y]_k \slash (I_i)_k$ where $\mathfrak{m}_i$ is the maximal ideal of $R_i$, for $i=1, 2$, $k\geq0$. Notice that $\phi$ induces an isomorphism $\mathfrak{m}_1^k \slash \mathfrak{m}_1^{k+1} \simeq \mathfrak{m}_2^k \slash \mathfrak{m}_2^{k+1}$ for any $k \geq 0$ by sending $\bar{x}^{k_1} \bar{y}^{k_2}$ to $(a\bar{x} + b\bar{y})^{k_1} (c\bar{x} + d\bar{y})^{k_2}$. Consider a new map $\psi: R_1 \rightarrow R_2$ that sends $x$ to $\psi(x) = ax + by$, $y$ to $\psi(y) = cx + dy$ where $a,b,c,d$ are the same as above. This is a homomorphism of algebras since $\phi$ is a homomorphism, and the ideals $I_1$ and $I_2$ are homogeneous. Notice that $\psi$ also induces an isomorphism $\mathfrak{m}_1^k \slash \mathfrak{m}_1^{k+1} \simeq \mathfrak{m}_2^k \slash \mathfrak{m}_2^{k+1}$. Hence $\psi$ is surjective, and since $R_1$ and $R_2$ have the same dimension as algebras over $\mathbb{K}$, it is injective, and thus $\psi$ is an isomorphism.

\end{proof}

Now we are ready to prove the main result.

\begin{proof}[Proof of Theorem $1$]

Let $R_1 \simeq \mathbb{K}[x, y] / I_1$ and $R_2 \simeq \mathbb{K}[x, y] / I_2$ be local Artinian algebras with $I_1$ and $I_2$ homogeneous ideals. Lemma $4.1$ shows that if two graded algebras $R_1$ and $R_2$ are isomorphic then there is an isomorphism that preserves grading on both algebras. This isomorphism can be lifted to a linear automorphism of $\mathbb{K}[x, y]$ which sends $I_1$ to $I_2$. Hence we have an algebraic action of the group $\mathrm{PGL}(2)$ on the algebraic variety (for the proof that the Hilbert scheme is in fact an algebraic variety see $[6$, Theorem $2.9]$) $\mathrm{G_T}$ parametrizing homogeneous ideals that correspond to local algebras of the type $\mathrm{T}$. The orbits of this action correspond bijectively to isomorphism classes of algebras. The dimension of $\mathrm{G_T}$ is known from Theorem $2$.     

If $\mathrm{dim}\ G_{\mathrm{T}} > \mathrm{dim}\ \mathrm{PGL}(2) = 3$ then the number of the orbits of this action is infinite, and there are infinitely many non-isomorphic algebras of the type $\mathrm{T}$. Hence we have to deal with the case $\mathrm{dim}\ G_{\mathrm{T}} \leq 3$. All the sequences $\mathrm{T}$ that satisfy this inequality are listed in Table $1$ by Proposition $2.1$. Then in Propositions $3.1 - 3.7$ it is proved that these sequences are of homogeneous finite type, and the statement follows.    
\end{proof}


\end{document}